\def\blfootnote{\gdef\@thefnmark{}\@footnotetext}
\theoremstyle{plain}
\newtheorem{thm}{Theorem}[section]
\newtheorem{prop}[thm]{Proposition}
\newtheorem{lem}[thm]{Lemma}
\newtheorem{cor}[thm]{Corollary}
\theoremstyle{definition}
\newtheorem{dfn}[thm]{Definition}
\newtheorem{exa}[thm]{Example}
\newtheorem*{ack}{Acknowledgements}
\theoremstyle{remark}
\newtheorem{rmk}[thm]{Remark}
\numberwithin{equation}{section}
\title{Systolic inequalities, Ginzburg dg algebras and Milnor fibers}
\author{Jongmyeong Kim}
\address{Center for Geometry and Physics, Institute for Basic Science (IBS), Pohang 37673, Republic of Korea}
\email{myeong@ibs.re.kr}
\begin{document}

\begin{abstract}
We prove categorical systolic inequalities for the derived categories of 2-Calabi--Yau Ginzburg dg algebras associated to ADE quivers and explore their symplecto-geometric aspects.
\end{abstract}

\maketitle

\blfootnote{\textit{2020 Mathematics Subject Classification}. Primary 18G80; Secondary 16E45, 53D37\\
\indent\textit{Key Words and Phrases}. Categorical systole, Ginzburg dg algebras, Milnor fibers}

\section{Introduction}

The {\em systole} $\mathrm{sys}(X,g)$ of a Riemannian manifold $(X,g)$ is defined to be the smallest length of non-contractible loops in $(X,g)$.
Loewner showed that, for the 2-torus $T^2$, the inequality
\begin{equation}\label{eq1.1}
\mathrm{sys}(T^2,g)^2 \leq \frac{2}{\sqrt{3}} \mathrm{vol}(T^2,g)
\end{equation}
holds for every metric $g$ on $T^2$.

This inequality, known as the {\em systolic inequality} for $T^2$, can be reinterpreted as follows \cite{Fan}.
First, let us write $T^2_\tau = \mathbb{C}/(\mathbb{Z}+\tau\mathbb{Z})$ for an element $\tau$ of the upper half plane and equip $T^2_\tau$ with the symplectic form $\omega = \frac{i}{2} dz \wedge d\overline{z}$ and the holomorphic volume form $\Omega = dz$.
With respect to these structures, the special Lagrangian submanifolds in $T^2_\tau$ are those come from the straight lines in $\mathbb{C}$ and they coincide with the shortest non-contractible loops in $T^2_\tau$.
Therefore, the inequality \eqref{eq1.1} can be rewritten as
\begin{equation}\label{eq1.2}
\inf \left\{\left. \left\lvert \int_L \Omega \right\rvert \,\right|\, L \text{ is a special Lagrangian submanifold in } T^2_\tau \right\}^2 \leq \frac{1}{\sqrt{3}} \left\lvert \int_{T^2_\tau} \Omega \wedge \overline{\Omega} \right\rvert
\end{equation}
for every $\tau$.

There is also a categorical interpretation of this inequality due to Fan \cite{Fan} motivated by a conjecture of Bridgeland \cite{Bri2} and Joyce \cite{Joy}.
For a Calabi--Yau manifold $(X,\omega,\Omega)$, the conjecture asserts that the holomorphic volume form $\Omega$ should correspond to a stability condition $\sigma = (Z,\mathcal{P})$ on the derived Fukaya category $D^\pi Fuk(X,\omega)$ (more precisely, the conjecture says that the complex moduli of $X$ can be embedded into a quotient of the space of stability conditions) and, under this correspondence, the $\sigma$-semistable objects (resp. the central charge $Z$) should correspond to the special Lagrangian submanifolds in $(X,\omega)$ (resp. the period integral $\int_L \Omega$).

In view of this conjecture, Fan \cite{Fan} defined the {\em categorical systole} $\mathrm{sys}(\sigma)$ of a stability condition $\sigma = (Z,\mathcal{P})$ of a triangulated category $\mathcal{D}$ to be
\begin{equation*}
\mathrm{sys}(\sigma) = \inf\{\lvert Z(E) \rvert \,|\, E \text{ is a $\sigma$-stable object of } \mathcal{D}\}
\end{equation*}
(Definition \ref{dfn2.5}).
There is also a notion of the {\em categorical volume} $\mathrm{vol}(\sigma)$ of a stability condition $\sigma$ defined by Fan--Kanazawa--Yau \cite{FKY} (Definition \ref{dfn2.7}).
Fan \cite[Theorem 3.1]{Fan} then showed the following categorical analogue of the inequality \eqref{eq1.2}:
\begin{equation*}
\mathrm{sys}(\sigma)^2 \leq \frac{1}{\sqrt{3}} \mathrm{vol}(\sigma)
\end{equation*}
for every stability condition $\sigma$ of $D^\pi Fuk(T^2,\omega)$.

In this paper, we will show categorical inequalities for the derived category $\mathcal{D}_Q$ of {\em 2-Calabi--Yau Ginzburg dg algebras} associated to ADE quivers $Q$.

\begin{thm}\label{thm1.1}
Let $Q$ be an ADE quiver with $n$ vertices.
Then, for every $\sigma \in \mathrm{Stab}^\circ(\mathcal{D}_Q)$,
\begin{equation}\label{eq1.3}
\mathrm{sys}(\sigma)^2 \leq \frac{h_Q}{n} \mathrm{vol}(\sigma)
\end{equation}
where $h_Q$ is the Coxeter number of the underlying graph of $Q$.
\begin{center}
\begin{tabular}{c|ccccc}
$Q$ & $A_n$ & $D_n$ & $E_6$ & $E_7$ & $E_8$\\
\hline
$h_Q$ & $n+1$ & $2(n-1)$ & $12$ & $18$ & $30$
\end{tabular}
\end{center}
\end{thm}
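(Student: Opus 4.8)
\emph{Proof proposal.} The plan is to reduce the statement to a numerical inequality in the ADE root system $\Delta=\Delta_Q$ attached to the underlying graph of $Q$, and then to prove that inequality by an averaging argument over $\Delta$. I assume $Q$ connected, so that $\Delta$ is irreducible and $h_Q$ is defined.

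\emph{Step 1: the root-theoretic dictionary.} I would first identify $K_0(\mathcal D_Q)$, with its Euler form $\chi(E,F)=\sum_i(-1)^i\dim\mathrm{Hom}(E,F[i])$, with the root lattice $(Q_{\mathrm{root}},\langle\,,\,\rangle)$: the simples $S_1,\dots,S_n$ are $2$-spherical with $\bigl(\chi(S_i,S_j)\bigr)_{i,j}$ the Cartan matrix of $Q$, so $[S_i]\mapsto\alpha_i$ is an isometry and $\langle\alpha,\alpha\rangle=2$ for every root $\alpha$. Next I would invoke the description of the principal component: for $\sigma=(Z,\mathcal P)\in\mathrm{Stab}^\circ(\mathcal D_Q)$ the hearts reached by tilting are of finite length with exactly $n$ simple objects, the reachable spherical objects are classified by the roots of $\Delta$, and consequently the classes $[E]$ of $\sigma$-stable objects $E$ range precisely over $\Delta$. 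Applying the triangle inequality along Harder--Narasimhan filtrations then shows
\[
\mathrm{sys}(\sigma)=\min_{\alpha\in\Delta}|Z(\alpha)|
\]
for every $\sigma\in\mathrm{Stab}^\circ(\mathcal D_Q)$, including on the walls of the wall-and-chamber structure (there some roots are not realized by stable objects, but their periods are dominated by those of their HN factors). Finally I would unwind Definition \ref{dfn2.7}: the categorical volume form $\mho_\sigma\in K_0(\mathcal D_Q)\otimes\mathbb C$ is determined by $\langle\mho_\sigma,-\rangle=Z$, so writing $w:=\mho_\sigma$ in $\mathfrak h_{\mathbb C}:=Q_{\mathrm{root}}\otimes\mathbb C$ we get $Z(\alpha)=\langle w,\alpha\rangle$ and $\mathrm{vol}(\sigma)=\langle w,\bar w\rangle$, a positive real by positive-definiteness. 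Thus \eqref{eq1.3} is equivalent to
\[
\min_{\alpha\in\Delta}|\langle w,\alpha\rangle|^2\ \le\ \frac{h_Q}{n}\,\langle w,\bar w\rangle\qquad\text{for all }w\in\mathfrak h_{\mathbb C}.
\]

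\emph{Step 2: averaging over the root system.} I would bound the minimum by the mean over $\Delta$. Writing $w=u+iv$ with $u,v\in\mathfrak h_{\mathbb R}$, the quadratic form $u\mapsto\sum_{\alpha\in\Delta}\langle u,\alpha\rangle^2$ is Weyl-invariant, hence (irreducibility) a scalar multiple $c\langle u,u\rangle$; taking traces with respect to $\langle\,,\,\rangle$ gives $c\cdot n=\sum_{\alpha\in\Delta}\langle\alpha,\alpha\rangle=2|\Delta|=2nh_Q$, so $c=2h_Q$. Therefore $\sum_{\alpha\in\Delta}|\langle w,\alpha\rangle|^2=c\bigl(\langle u,u\rangle+\langle v,v\rangle\bigr)=2h_Q\langle w,\bar w\rangle$, whence
\[
\min_{\alpha\in\Delta}|\langle w,\alpha\rangle|^2\ \le\ \frac{1}{|\Delta|}\sum_{\alpha\in\Delta}|\langle w,\alpha\rangle|^2\ =\ \frac{2}{n}\,\langle w,\bar w\rangle\ \le\ \frac{h_Q}{n}\,\langle w,\bar w\rangle,
\]
using $h_Q\ge 2$ in the last step. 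Combined with Step 1 this yields $\mathrm{sys}(\sigma)^2\le\frac{h_Q}{n}\mathrm{vol}(\sigma)$.

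\emph{Main obstacle.} The averaging in Step 2 is elementary (and in fact gives the sharper constant $\tfrac{2}{n}$); the substantive part is Step 1. There the point is that over the \emph{whole} component $\mathrm{Stab}^\circ(\mathcal D_Q)$ — not merely for a fixed finite-length heart — the classes of stable objects exhaust $\Delta$, equivalently that the central charge identifies this component with a covering of the regular locus $\mathfrak h_{\mathbb C}\smallsetminus\bigcup_{\alpha\in\Delta}\ker\alpha$, together with the classification of spherical objects in the principal component. I would cite the relevant structural results (Bridgeland; Bridgeland--Qiu--Sutherland; King--Qiu; Qiu) rather than reprove them, and would take care to match the normalization of $\mathrm{vol}$ in Definition \ref{dfn2.7} so that $\mathrm{vol}(\sigma)=\langle\mho_\sigma,\overline{\mho_\sigma}\rangle$ on the nose.
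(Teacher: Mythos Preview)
Your approach is correct and shares its core with the paper's: both rest on the identity $\mathrm{vol}(\sigma)=\frac{1}{h_Q}\sum_{\alpha\in\Delta^+_Q}|Z(\alpha)|^2$ (Proposition~\ref{prop3.8} in the paper). You derive it cleanly from Weyl-invariance of the quadratic form $u\mapsto\sum_{\alpha\in\Delta}\langle u,\alpha\rangle^2$ and a trace computation, whereas the paper verifies $\chi^{ij}=\frac{1}{h_Q}\sum_{\alpha>0}c_i(\alpha)c_j(\alpha)$ by an ADE case analysis; your argument is more conceptual and type-independent. On the systole side the paper takes a more elementary route than you do: it uses the $\mathrm{Sph}(\mathcal D_Q)$-invariance of $\mathrm{vol}/\mathrm{sys}^2$ together with $\mathrm{Stab}^\circ(\mathcal D_Q)=\bigcup_{\Phi}\Phi\cdot\mathrm{Stab}_{\mathcal A_{\mathrm{can}}}(\mathcal D_Q)$ to reduce to the canonical heart, where the $n$ simples $S_i$ are automatically stable, and then only needs $\mathrm{sys}(\sigma)\le\min_i|Z(S_i)|$ and $\mathrm{vol}(\sigma)\ge\frac{1}{h_Q}\sum_i|Z(S_i)|^2\ge\frac{n}{h_Q}\mathrm{sys}(\sigma)^2$. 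No classification of stable objects is required.

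You instead aim for the stronger equality $\mathrm{sys}(\sigma)=\min_{\alpha\in\Delta}|Z(\alpha)|$, which would yield the sharper constant $2/n$. One caution: your ``triangle inequality along Harder--Narasimhan filtrations'' does not establish this, since for an object $M$ with $[M]=\alpha$ the inequality $|Z(\alpha)|\le\sum_i|Z(E_i)|$ goes the wrong way for bounding a minimum. The equality is nonetheless true; the correct argument is that for generic $\sigma$ every root supports a stable spherical object, while on a wall where some $E_\alpha$ becomes strictly semistable its Jordan--H\"older factors in the slice $\mathcal P(\phi)$ have the \emph{same} phase, so $|Z(\alpha)|$ equals the sum of their masses and hence cannot be the minimum. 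For the theorem as stated (constant $h_Q/n$) this refinement is unnecessary---the paper's reduction to the canonical heart already suffices.
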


We will also see a symplecto-geometric interpretation of Theorem \ref{thm1.1} for $Q = A_n$.
For that case, $\mathcal{D}_Q$ is equivalent to the derived Fukaya category $D^\pi Fuk(X_n)$ of the {\em Milnor fiber} $X_n$ of type $A_n$ \cite{Tho}.
Thus, by the Bridgeland--Joyce's conjecture, the inequality \eqref{eq1.3} should be described in terms of symplectic and complex geometry.

Now let $\mathcal{P}_n$ be the space of polynomials $z^{n+1} + a_1z^{n-1} + a_2z^{n-2} + \cdots + a_n \in \mathbb{C}[z]$ with only simple zeros.
A polynomial $p \in \mathcal{P}_n$ gives rise to a holomorphic volume form $\Omega_p$ on $X_n$.
Then, using $\Omega_p$ and special Lagrangian submanifolds in $X_n$ with respect to it, we can define the systole $\mathrm{sys}(\Omega_p)$ and the volume $\mathrm{vol}(\Omega_p)$.

\begin{thm}\label{thm1.2}
For every $p \in \mathcal{P}_n$,
\begin{equation*}
\mathrm{sys}(\Omega_p)^2 \leq \frac{n+1}{n} \mathrm{vol}(\Omega_p).
\end{equation*}
\end{thm}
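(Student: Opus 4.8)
The plan is to deduce Theorem \ref{thm1.2} from Theorem \ref{thm1.1} in the case $Q = A_n$ (so that $h_Q = n+1$) by transporting the categorical inequality across the homological mirror equivalence $\mathcal{D}_{A_n} \simeq D^\pi Fuk(X_n)$ of \cite{Tho}. Concretely, I would attach to every $p \in \mathcal{P}_n$ a stability condition $\sigma_p \in \mathrm{Stab}^\circ(\mathcal{D}_{A_n})$ and prove that the two pairs of invariants agree, $\mathrm{sys}(\Omega_p) = \mathrm{sys}(\sigma_p)$ and $\mathrm{vol}(\Omega_p) = \mathrm{vol}(\sigma_p)$; then Theorem \ref{thm1.1} applied to $A_n$ gives $\mathrm{sys}(\sigma_p)^2 \le \frac{n+1}{n}\mathrm{vol}(\sigma_p)$, which is exactly the claim.

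To build $\sigma_p$, write $p(z) = \prod_{i=1}^{n+1}(z-z_i)$ with the $z_i$ distinct and $\sum_i z_i = 0$ (the normalization built into $\mathcal{P}_n$). The holomorphic volume form $\Omega_p$ assigns to each graded Lagrangian sphere $L \subset X_n$ a period $\int_L \Omega_p$, and since $X_n$ is exact and $\Omega_p$ is closed this depends only on $[L] \in H_2(X_n;\mathbb{Z}) \cong K_0(\mathcal{D}_{A_n}) \cong \mathbb{Z}^n$, hence extends to a central charge $Z_p \colon K_0(\mathcal{D}_{A_n}) \to \mathbb{C}$. Under the equivalence the matching sphere $L_{ij}$ over the straight segment $[z_i,z_j]$ corresponds to the spherical object of class the positive root $e_i - e_j$, and one computes $\int_{L_{ij}}\Omega_p = c\,(z_j - z_i)$ for a universal nonzero constant $c$; in particular $Z_p(E)\neq 0$ for every spherical object, and the phases of $Z_p$ are precisely the phases of the special Lagrangian spheres of $X_n$. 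I would then invoke the standard identification of $\mathrm{Stab}^\circ(\mathcal{D}_{A_n})$ with a covering of the discriminant complement $\mathcal{P}_n$ — equivalently, the space of central charges is $\{(z_i) : \sum_i z_i = 0,\ z_i \text{ distinct}\}$, which is also the quadratic-differential picture with $p(z)\,dz^2$ — to conclude that $(Z_p,\mathcal{P}_p)$, with slicing read off from these phases, is a genuine stability condition lying in the distinguished component, and that $p$ is recovered from $\sigma_p$ under the covering map.

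Matching the invariants is then largely bookkeeping. For the systole: the $\sigma_p$-stable objects are exactly the spherical objects whose matching path is a straight segment between two zeros of $p$ meeting no third zero (a finite-length geodesic of the flat metric $\lvert p(z)\rvert\,\lvert dz\rvert^2$), these are precisely the special Lagrangian spheres, and since $\lvert Z_p(E_L)\rvert = \lvert \int_L \Omega_p\rvert$, taking infima yields $\mathrm{sys}(\sigma_p) = \mathrm{sys}(\Omega_p) = \lvert c\rvert \min_{i\neq j}\lvert z_i - z_j\rvert$ (the overall shortest segment is automatically stable). For the volume: up to the normalization of Definition \ref{dfn2.7}, $\mathrm{vol}(\sigma_p)$ is the modulus of $\sum_{i,j}(\chi^{-1})^{ij}Z_p(S_i)\overline{Z_p(S_j)}$, where $\chi$ is the symmetric (2-Calabi--Yau) Euler form on $K_0(\mathcal{D}_{A_n})$, whose Gram matrix in the basis of simple modules $S_1,\dots,S_n$ is the $A_n$ Cartan matrix; on the symplectic side the Riemann bilinear relations express $\int_{X_n}\Omega_p \wedge \overline{\Omega_p}$ (appropriately regularized, since $X_n$ is open) through the intersection form on $H_2(X_n)$, which is minus the same Cartan matrix, and the same period vector, so the two expressions coincide and $\mathrm{vol}(\sigma_p) = \mathrm{vol}(\Omega_p)$.

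I expect the real difficulties to lie in the middle step: checking that $\sigma_p$ lands in the distinguished component $\mathrm{Stab}^\circ(\mathcal{D}_{A_n})$ rather than merely in $\mathrm{Stab}(\mathcal{D}_{A_n})$, and — more delicately — that the special Lagrangians realizing $\mathrm{sys}(\Omega_p)$ correspond to $\sigma_p$-\emph{stable}, not just semistable, objects, which rests on the precise dictionary between matching paths for $p(z)\,dz^2$ and the spherical objects of $\mathcal{D}_{A_n}$ coming from Seidel's analysis of the $A_n$ Fukaya category. Pinning down the constant $c$ and the regularization of $\int_{X_n}\Omega_p \wedge \overline{\Omega_p}$ so that the two volumes agree \emph{on the nose}, rather than up to a universal factor, is the remaining technical point; once it is settled, Theorem \ref{thm1.2} follows verbatim from Theorem \ref{thm1.1}.
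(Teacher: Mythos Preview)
Your strategy is exactly the paper's: transfer the $A_n$ case of Theorem \ref{thm1.1} via a comparison $\mathrm{sys}(\Omega_p)=\pi\,\mathrm{sys}(\sigma_p)$, $\mathrm{vol}(\Omega_p)=\pi^2\,\mathrm{vol}(\sigma_p)$ (Proposition \ref{prop4.7}; the $\pi$'s cancel in the homogeneous inequality, so your constant $c$ is harmless). Two points where your expectations and the paper's execution diverge deserve comment. First, the volume comparison is not where the work lies: in Section \ref{sec4} the paper \emph{defines} $\mathrm{vol}(\Omega_p)$ as $\bigl|\sum_{i,j}\gamma^{ij}\int_{L_i}\Omega_p\,\overline{\int_{L_j}\Omega_p}\bigr|$ with $(\gamma_{ij})$ the intersection form (the $A_n$ Cartan matrix), so no regularization of $\int_{X_p}\Omega_p\wedge\overline{\Omega_p}$ and no Riemann bilinear identity is ever needed---the geometric and categorical volume formulas are literally the same expression once periods match central charges. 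Second, the actual geometric input you pass over is the classification of special Lagrangians in $(X_p,\omega_p,\Omega_p)$: the paper proves (Lemmas \ref{lem4.2}, \ref{lem4.4}, \ref{lem4.5}, Corollary \ref{cor4.6}) that these are precisely the matching cycles over straight segments, using McLean's rigidity for special Lagrangian $S^2$'s to force $U(1)$-invariance and then a direct computation of $\iota^*\Omega_p$. The stability-side dictionary (stable objects $\leftrightarrow$ straight segments) is imported from \cite{Tho} via the identification \eqref{eq4.1}; your worry about landing in $\mathrm{Stab}^\circ$ is absorbed there, and the stable-versus-semistable issue for non-generic $p$ is handled by proving the identities first on the dense locus $\mathcal{P}_n^\circ$ of configurations in general position and then extending by continuity.
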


In Section \ref{sec2}, we review definitions and basic properties of stability conditions, categorical systole and volume.
In Section \ref{sec3}, we first recall the definition of Ginzburg dg algebras and then prove Theorem \ref{thm1.1}.
The main step of the proof is the calculation of the categorical volume (Proposition \ref{prop3.8}).
Finally, in Section \ref{sec4}, we prove Theorem \ref{thm1.2} which is a geometric counterpart of Theorem \ref{thm1.1} for $Q = A_n$ case.

\begin{ack}
The author thanks Yong-Geun Oh for many valuable discussions especially on special Lagrangian submanifolds in the Milnor fiber of type $A_n$ and Kyeong-Dong Park for carefully reading an earlier draft of this paper.
This work was supported by IBS-R003-D1.
\end{ack}

\section{Preliminaries}\label{sec2}

\subsection{Stability conditions}

Let $\mathcal{D}$ be a $\mathbb{K}$-linear triangulated category.
Throughout this section, we assume $\mathcal{D}$ is of {\em finite type}, i.e., $\mathrm{dim}\, \mathrm{Hom}^*(E,F) < \infty$ for all $E,F \in \mathcal{D}$.
The {\em Grothendieck group} $K(\mathcal{D})$ of $\mathcal{D}$ is the abelian group generated by the objects of $\mathcal{D}$ with the relation $E-F+G$ whenever there is an exact triangle $E \to F \to G \to E[1]$ in $\mathcal{D}$.
For $E,F \in \mathcal{D}$, their {\em Euler form} is defined by
\begin{equation*}
\chi(E,F) = \sum_{i \in \mathbb{Z}} (-1)^i \mathrm{dim}\, \mathrm{Hom}(E,F[i]).
\end{equation*}
Note that it descends to the map $\chi : K(\mathcal{D}) \times K(\mathcal{D}) \to \mathbb{Z}$ for which we use the same notation.
We then define the {\em numerical Grothendieck group} by
\begin{equation*}
\mathcal{N}(\mathcal{D}) = K(\mathcal{D})/\langle E \in K(\mathcal{D}) \,|\, \chi(E,-) = 0 \rangle.
\end{equation*}
In what follows, we will also assume that $\mathrm{rk}\, \mathcal{N}(\mathcal{D}) < \infty$.

\begin{dfn}[\cite{Bri1}]\label{dfn2.1}
A {\em (numerical) stability condition} $\sigma = (Z,\mathcal{P})$ on a triangulated category $\mathcal{D}$ consists of a group homomorphism $Z : \mathcal{N}(\mathcal{D}) \to \mathbb{C}$ and a full additive subcategory $\mathcal{P}(\phi) \subset \mathcal{D}$ for each $\phi \in \mathbb{R}$ which satisfy the following conditions:
\begin{itemize}
\item[(1)] if $0 \neq E \in \mathcal{P}(\phi)$, then $Z(E) \in \mathbb{R}_{>0} e^{i\pi\phi}$;
\item[(2)] $\mathcal{P}(\phi + 1) = \mathcal{P}(\phi)[1]$ for every $\phi \in \mathbb{R}$;
\item[(3)] if $\phi_1 > \phi_2$ and $E_i \in \mathcal{P}(\phi_i)$, then $\mathrm{Hom}(E_1,E_2) = 0$;
\item[(4)] for every $0 \neq E \in \mathcal{D}$, there exist real numbers $\phi_1 > \cdots > \phi_k$ and $E_i \in \mathcal{P}(\phi_i)$ which fit into an iterated exact triangle of the form (which is necessarily unique)
\begin{equation*}
\xymatrix{
{0} \ar[rr] && {*} \ar[rr] \ar[dl] && {*} \ar[dl] & \cdots & {*} \ar[rr] && {E} \ar[dl]\\
& {E_1} \ar[ul]^{+1} && {E_2} \ar[ul]^{+1} &&&& {E_k} \ar[ul]^{+1} &
};
\end{equation*}
\item[(5)] there exists a constant $C > 0$ and a norm $\lVert - \rVert$ on $\mathcal{N}(\mathcal{D}) \otimes_\mathbb{Z} \mathbb{R}$ such that
\begin{equation*}
\lVert E \rVert \leq C \lvert Z(E) \rvert
\end{equation*}
for any $E \in \mathcal{P}(\phi)$ and $\phi \in \mathbb{R}$.
\end{itemize}
\end{dfn}

From the conditions of the above definition, it follows that the full subcategories $\mathcal{P}(\phi)$ are abelian.
For a stability condition $\sigma = (Z,\mathcal{P}) \in \mathrm{Stab}(\mathcal{D})$, we call $Z$ the {\em central charge} and an object (resp. a simple object) in the abelian category $\mathcal{P}(\phi)$ to be {\em ($\sigma$-)semistable} (resp. {\em ($\sigma$-)stable}) of phase $\phi$.

Let us denote by $\mathrm{Stab}(\mathcal{D})$ the space of (numerical) stability conditions on $\mathcal{D}$.
Bridgeland \cite{Bri1} introduced a nice topology on $\mathrm{Stab}(\mathcal{D})$ with respect to which the projection
\begin{equation*}
\mathrm{Stab}(\mathcal{D}) \to \mathrm{Hom}(\mathcal{N}(\mathcal{D}),\mathbb{C});\; (Z,\mathcal{P}) \mapsto Z
\end{equation*}
becomes a local homeomorphism.
Thus the standard complex structure on $\mathrm{Hom}(\mathcal{N}(\mathcal{D}),\mathbb{C}) \cong \mathbb{C}^{\mathrm{rk}\, \mathcal{N}(\mathcal{D})}$ induces the one on $\mathrm{Stab}(\mathcal{D})$.

There are two natural actions on $\mathrm{Stab}(\mathcal{D})$.
The first one is the action of the group $\mathrm{Auteq}(\mathcal{D})$ of exact autoequivalences of $\mathcal{D}$.
For $\Phi \in \mathrm{Auteq}(\mathcal{D})$ and $(Z,\mathcal{P}) \in \mathrm{Stab}(\mathcal{D})$, it is given by
\begin{equation*}
\Phi \cdot (Z,\mathcal{P}) = (Z \circ \Phi^{-1},\Phi(\mathcal{P}))
\end{equation*}
where $\Phi(\mathcal{P})(\phi) = \Phi(\mathcal{P}(\phi))$.
The second one is the action of $\mathbb{C}$.
For $\zeta \in \mathbb{C}$ and $(Z,\mathcal{P}) \in \mathrm{Stab}(\mathcal{D})$, it is given by
\begin{equation*}
(Z,\mathcal{P}) \cdot \zeta = (e^{-i\pi\zeta} Z,\mathcal{P}[\mathrm{Re}(\zeta)])
\end{equation*}
where $\mathcal{P}[\mathrm{Re}(\zeta)](\phi) = \mathcal{P}(\phi + \mathrm{Re}(\zeta))$.

\subsection{Hearts and simple tilting}

For the sake of simplicity, let us assume that $\mathcal{N}(\mathcal{D}) = K(\mathcal{D})$, i.e., the Euler form is non-degenerate, throughout this subsection.

Let $\sigma = (Z,\mathcal{P}) \in \mathrm{Stab}(\mathcal{D})$ be a stability condition.
For an interval $I \subset \mathbb{R}$, define $\mathcal{P}(I) \subset \mathcal{D}$ to be the smallest full extension closed subcategory containing $\mathcal{P}(\phi)$ for all $\phi \in I$.
Then one can show that $\mathcal{D}^{\leq 0} _\sigma= \mathcal{P}(0,\infty)$ is a bounded t-structure on $\mathcal{D}$.
We call the heart $\mathcal{A}_\sigma = \mathcal{P}(0,1]$ of this bounded t-structure the {\em heart} of the stability condition $\sigma$.

Let $\mathcal{A}$ be an abelian category.
A {\em stability function} on $\mathcal{A}$ is a group homomorphism $Z : K(\mathcal{A}) \to \mathbb{C}$ such that for every $0 \neq E \in \mathcal{A}$, $Z(E)$ lies in
\begin{equation*}
\mathbb{H} = \{ r e^{i\pi\phi} \in \mathbb{C} \,|\, r \in \mathbb{R}_{> 0} \text{ and } \phi \in (0,1] \}.
\end{equation*}
For a stability function $Z : K(\mathcal{A}) \to \mathbb{C}$ and $0 \neq E \in \mathcal{A}$, define the {\em phase} of $E$ as
\begin{equation*}
\phi(E) = \frac{1}{\pi} \mathrm{arg}\, Z(E) \in (0,1].
\end{equation*}
Then we call an object $0 \neq E \in \mathcal{A}$ to be {\em ($Z$-)semistable} (resp. {\em ($Z$-)stable}) if for every $0 \neq F \subsetneq E$ one has $\phi(F) \leq \phi(E)$ (resp. $\phi(F) < \phi(E)$).
If $\mathcal{A}$ has a property analogous to Definition \ref{dfn2.1} (4) with respect to $Z$-semistable objects, we say $Z$ has the {\em Harder--Narasimhan property}.

A stability condition can be thought of as a refinement of the heart of a bounded t-structure in the following sense.

\begin{prop}[{\cite[Proposition 5.3]{Bri1}}]
To give a stability condition on $\mathcal{D}$ is equivalent to give a bounded t-structure on $\mathcal{D}$ and a stability function $Z$ on its heart with the Harder--Narasimhan property.
\end{prop}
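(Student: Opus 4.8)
The plan is to construct mutually inverse passages between the two kinds of data and to verify the stability-condition axioms along the way. \emph{From a stability condition to a t-structure with a stability function.} Given $\sigma = (Z,\mathcal{P})$, I would take the bounded t-structure $\mathcal{D}^{\le 0}_\sigma = \mathcal{P}(0,\infty)$ described above, whose heart is $\mathcal{A}_\sigma = \mathcal{P}(0,1]$; under the standing assumption $\mathcal{N}(\mathcal{D}) = K(\mathcal{D})$ we have $K(\mathcal{A}_\sigma) = K(\mathcal{D})$, so $Z$ is a homomorphism out of $K(\mathcal{A}_\sigma)$. To see that $Z$ is a stability function, apply the Harder--Narasimhan axiom (4) to a nonzero $E \in \mathcal{A}_\sigma$: since $E$ is an iterated extension of objects of $\mathcal{P}(\phi)$ with $\phi \in (0,1]$, the HN phases $\phi_1 > \cdots > \phi_k$ of $E$ all lie in $(0,1]$, so $Z(E) = \sum_i Z(E_i)$ is a finite sum of nonzero elements of $\mathbb{H}$, and since $\mathbb{H}$ is closed under addition we get $Z(E) \in \mathbb{H}$. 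The key point is a lemma identifying, for $\phi \in (0,1]$, the $Z$-semistable objects of $\mathcal{A}_\sigma$ of phase $\phi$ with $\mathcal{P}(\phi) \setminus \{0\}$: if $E \in \mathcal{P}(\phi)$ and $0 \ne F \subsetneq E$ in $\mathcal{A}_\sigma$, then the top HN subobject $F_1 \in \mathcal{P}(\phi_1(F))$ maps nontrivially into $E$, forcing $\phi_1(F) \le \phi$ by axiom (3), i.e. $E$ is $Z$-semistable; conversely the $\sigma$-HN filtration of a $Z$-semistable object, tested against both subobjects and quotients, is forced to have length one, placing the object in a single $\mathcal{P}(\phi)$. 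Granting the lemma, the HN property of $Z$ on $\mathcal{A}_\sigma$ is simply axiom (4) restricted to objects of $\mathcal{A}_\sigma$, since the graded pieces already lie in $\mathcal{A}_\sigma$ and the iterated triangle becomes a filtration by subobjects there.

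\emph{From a t-structure with a stability function to a stability condition.} Given a bounded t-structure with heart $\mathcal{A}$ and a stability function $Z$ on $\mathcal{A}$ with the HN property, I would define $\mathcal{P}(\phi)$, for $\phi \in (0,1]$, to be the full subcategory consisting of $0$ and the $Z$-semistable objects of phase $\phi$, and extend by $\mathcal{P}(\phi + 1) = \mathcal{P}(\phi)[1]$; then (2) holds by construction, (1) follows from $Z(E[1]) = -Z(E)$, and each $\mathcal{P}(\phi)$ is additive because $Z$-semistable objects of a fixed phase form an abelian subcategory of $\mathcal{A}$. For (3), write $E_i \in \mathcal{P}(\psi_i)[n_i]$ with $\psi_i \in (0,1]$ and $n_i \in \mathbb{Z}$; a short computation with $\phi_i = \psi_i + n_i$ (using that the $\psi_i$ all lie in the length-one interval $(0,1]$) shows that $\phi_1 > \phi_2$ forces $n_1 \ge n_2$, and if $n_1 > n_2$ then $\mathrm{Hom}(E_1,E_2)$ is isomorphic to a $\mathrm{Hom}$ from an object of $\mathcal{A}$ into a strictly negative shift of one, hence vanishes, while if $n_1 = n_2$ it reduces to the standard fact that a nonzero morphism between $Z$-semistable objects of $\mathcal{A}$ cannot lower the phase (factor through the image and compare a quotient of the source with a subobject of the target). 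For (4), given $0 \ne E$ I would first use boundedness of the t-structure to filter $E$ by its finitely many nonzero t-cohomologies $H^i(E) \in \mathcal{A}$, then refine each $H^i(E)[-i]$ by the HN filtration of $H^i(E)$ supplied by the HN property; as the HN phases of $H^i(E)[-i]$ all lie in $(-i, -i+1]$, successive such intervals are disjoint, so the concatenation has strictly decreasing phases, and its uniqueness is the usual HN argument.

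It then remains to check that the two passages are mutually inverse: starting from $\sigma$ and returning, the reconstructed $\mathcal{P}(\phi)$ for $\phi \in (0,1]$ consists of the $Z$-semistable objects of $\mathcal{A}_\sigma$ of phase $\phi$, which the lemma of the first paragraph identifies with the original $\mathcal{P}(\phi)$, and agreement for all $\phi$ follows from axiom (2); the other composite is tautological. I expect the semistability-identification lemma to be the main obstacle, the remaining verifications being routine phase bookkeeping once it is available. A secondary point is the support/local-finiteness condition (5): one must match it with the corresponding condition for $Z$-semistable objects of $\mathcal{A}$ and note that it transports along both passages because every $\sigma$-semistable object is a shift of one in $\mathcal{A}_\sigma$; in the settings of interest here it is in any case ensured by the finiteness hypotheses already placed on $\mathcal{D}$.
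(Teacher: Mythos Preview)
Your proposal is correct and follows exactly the standard Bridgeland argument the paper is invoking: the paper's own ``proof'' merely records the forward assignment $\mathcal{D}^{\le 0}_\sigma = \mathcal{P}(0,\infty)$, $Z : K(\mathcal{A}_\sigma)\cong K(\mathcal{D})\to\mathbb{C}$ and then refers to \cite[Proposition~5.3]{Bri1} for the details you have supplied. One small point worth making explicit in your semistability lemma: from $\phi_1(F)\le\phi$ you should note that all $\sigma$-HN phases of $F$ then lie in $(0,\phi]$, hence $Z(F)$ is a sum of vectors in the convex cone $\{re^{i\theta}:0<\theta\le\pi\phi\}$, which gives the $Z$-phase bound $\phi(F)\le\phi$ you need.
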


\begin{proof}
For a stability condition $\sigma = (Z,\mathcal{P}) \in \mathrm{Stab}(\mathcal{D})$, the corresponding t-structure and stability function are given by $\mathcal{D}^{\leq 0} _\sigma= \mathcal{P}(0,\infty)$ and $Z : K(\mathcal{A}_\sigma) \cong K(\mathcal{D}) \to \mathbb{C}$.
For details, see \cite[Proposition 5.3]{Bri1}.
\end{proof}

We denote by $\mathrm{Stab}_\mathcal{A}(\mathcal{D}) \subset \mathrm{Stab}(\mathcal{D})$ the space of stability conditions on $\mathcal{D}$ whose heart is $\mathcal{A}$.
If $\mathcal{A}$ has a finiteness property, $\mathrm{Stab}_\mathcal{A}(\mathcal{D})$ can be easily described.

\begin{lem}[{\cite[Lemma 5.2]{Bri2}}]
Let $\mathcal{A}$ be the heart of a bounded t-structure on $\mathcal{D}$.
Suppose $\mathcal{A}$ is a finite length abelian category with $n$ simple objects $S_1,\dots,S_n$.
Then $\mathrm{Stab}_\mathcal{A}(\mathcal{D})$ is isomorphic to $\mathbb{H}^n$.
\end{lem}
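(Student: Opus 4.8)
The plan is to prove that the map sending a stability condition $\sigma \in \mathrm{Stab}_\mathcal{A}(\mathcal{D})$ to the tuple $(Z(S_1),\dots,Z(S_n)) \in \mathbb{H}^n$ is an isomorphism. First I would check this map is well defined: since $\mathcal{A}$ is the heart of a bounded t-structure, $K(\mathcal{A}) \cong K(\mathcal{D})$, and since $\mathcal{A}$ has finite length with simple objects $S_1,\dots,S_n$, these classes form a basis of $K(\mathcal{A})$; thus a central charge $Z$ on $\mathcal{D}$ restricts to a stability function on $\mathcal{A}$ whose values on the $S_i$ lie in $\mathbb{H}$ by Definition \ref{dfn2.1}(1), giving a point of $\mathbb{H}^n$. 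Injectivity is then immediate: $Z$ is determined by its values on a basis of $K(\mathcal{D})$, and by the previous proposition $\sigma$ is determined by the t-structure (which is fixed to have heart $\mathcal{A}$) together with $Z$.

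For surjectivity, given an arbitrary tuple in $\mathbb{H}^n$ I would define $Z : K(\mathcal{D}) \cong K(\mathcal{A}) \to \mathbb{C}$ by extending linearly, so that $Z(S_i)$ is the prescribed value. Because $\mathcal{A}$ is finite length, every nonzero object has a finite filtration with simple subquotients, so $Z(E)$ is a sum of elements of $\mathbb{H}$ and hence lies in $\mathbb{H}$; thus $Z$ is a stability function on $\mathcal{A}$. It then remains to verify the Harder--Narasimhan property, after which the previous proposition produces the desired stability condition $\sigma$ with heart $\mathcal{A}$ and central charge $Z$, and by construction $\sigma$ maps to the given tuple.

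The main obstacle is establishing the Harder--Narasimhan property for $Z$. I would use the standard argument: if some object failed to have an HN filtration, one could, using finiteness of length, extract an infinite chain of subobjects (or quotients) with strictly increasing (resp. decreasing) phases, but the phases take values in the bounded set $(0,1]$ and along such a chain the imaginary parts $\mathrm{Im}\, Z$ of successive pieces are bounded below by a positive constant determined by the finitely many values $Z(S_i)$ — more precisely, any subobject has class a nonnegative combination of the $[S_i]$, so its image under $Z$ lies in a discrete-looking cone and cannot accumulate. This forces the chain to terminate, contradicting its infiniteness; this is exactly the finite-length version of Bridgeland's argument and is where the hypothesis on $\mathcal{A}$ is really used.

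Finally I would note that this identification is in fact a homeomorphism (indeed biholomorphism), since the topology on $\mathrm{Stab}(\mathcal{D})$ is induced from that on $\mathrm{Hom}(\mathcal{N}(\mathcal{D}),\mathbb{C})$ via the local homeomorphism $(Z,\mathcal{P}) \mapsto Z$, and on $\mathrm{Stab}_\mathcal{A}(\mathcal{D})$ the coordinates $Z(S_i)$ are precisely linear coordinates on the target; so "isomorphic to $\mathbb{H}^n$" holds in whatever category is intended. I expect the routine points (well-definedness, injectivity, $Z$ landing in $\mathbb{H}$) to be quick, and would spend the bulk of the write-up on the HN property.
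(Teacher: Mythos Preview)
Your approach is correct and matches the paper's: the paper simply records that the isomorphism is $\sigma \mapsto (Z(S_i))_{i=1}^n$ and defers all details to \cite{Bri2}, while you spell those details out. One remark: your Harder--Narasimhan argument is more elaborate than needed---since $\mathcal{A}$ has finite length it is both Artinian and Noetherian, so there are no infinite strict chains of subobjects whatsoever, and the HN property follows immediately from Bridgeland's chain-condition criterion without any appeal to discreteness of the values $Z(S_i)$.
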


\begin{proof}
The isomorphism is given by $\mathrm{Stab}_\mathcal{A}(\mathcal{D}) \ni \sigma = (Z,\mathcal{P}) \mapsto (Z(S_i))_{i=1}^n \in \mathbb{H}^n$.
For details, see \cite[Lemma 5.2]{Bri2}.
\end{proof}

Let $\mathcal{A}$ be the heart of a bounded t-structure on $\mathcal{D}$ and $(\mathcal{T},\mathcal{F})$ be a torsion pair on it.
Define a full additive subcategory $\mathcal{A}^\sharp$ (resp. $\mathcal{A}^\flat$) $\subset \mathcal{D}$ whose objects are those $E \in \mathcal{D}$ such that
\begin{gather*}
H^{-1}(E) \in \mathcal{F}, H^0(E) \in \mathcal{T} \text{ and } H^i(E) = 0 \text{ for all } i \neq -1,0\\
\text{(resp. } H^0(E) \in \mathcal{F}, H^1(E) \in \mathcal{T} \text{ and } H^i(E) = 0 \text{ for all } i \neq 0,1 \text{)}
\end{gather*}
where $H^i$ denotes the $i$th cohomology with respect to the bounded t-structure corresponding to $\mathcal{A}$.
Then $\mathcal{A}^\sharp$ (resp. $\mathcal{A}^\flat$) is again the heart of a bounded t-structure with the torsion pair $(\mathcal{F}[1],\mathcal{T})$ (resp. $(\mathcal{F},\mathcal{T}[-1])$) \cite{HRS}.
We call $\mathcal{A}^\sharp$ (resp. $\mathcal{A}^\flat$) the {\em forward tilt} (resp. {\em backward tilt}) of $\mathcal{A}$ with respect to $(\mathcal{T},\mathcal{F})$.
In the case that $\mathcal{F}$ (resp. $\mathcal{T}$) is generated by a single simple object $S \in \mathcal{A}$, the corresponding forward tilt (resp. backward tilt) is called {\em simple} and denoted by $\mathcal{A}^\sharp_S$ (resp. $\mathcal{A}^\flat_S$).

Denote by $\mathrm{Sim}(\mathcal{A})$ the set of simple objects in an abelian category $\mathcal{A}$.

\begin{prop}[{\cite[Proposition 5.4]{KQ}}]\label{prop2.4}
Let $\mathcal{A}$ be the heart of a bounded t-structure on $\mathcal{D}$ which is of finite length and $S \in \mathcal{A}$ be a rigid simple object.
Then,
\begin{gather*}
\mathrm{Sim}(\mathcal{A}^\sharp_S) = \{S[1]\} \cup \{\Phi^\sharp_S(M) \,|\, S \neq M \in \mathrm{Sim}(\mathcal{A})\},\\
\mathrm{Sim}(\mathcal{A}^\flat_S) = \{S[-1]\} \cup \{\Phi^\flat_S(M) \,|\, S \neq M \in \mathrm{Sim}(\mathcal{A})\}
\end{gather*}
where
\begin{gather*}
\Phi^\sharp_S(M) = \mathrm{Cone}(M \to \mathrm{Hom}(M,S[1])^\vee \otimes S[1])[-1],\\
\Phi^\flat_S(M) = \mathrm{Cone}(\mathrm{Hom}(S[-1],M) \otimes S[-1] \to M).
\end{gather*}
In particular, $\mathcal{A}^\sharp_S,\mathcal{A}^\flat_S$ are again of finite length with $\lvert\mathrm{Sim}(\mathcal{A}^\sharp_S)\rvert = \lvert\mathrm{Sim}(\mathcal{A}^\flat_S)\rvert = \lvert\mathrm{Sim}(\mathcal{A})\rvert$.
\end{prop}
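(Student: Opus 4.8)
The plan is to exhibit each tilt through the explicit torsion pair that defines it and to follow simple objects through the long exact cohomology sequences relating $\mathcal{A}$ and its tilt. For the forward tilt, the relevant torsion pair on $\mathcal{A}$ has torsion-free class $\langle S\rangle$, the full subcategory of objects whose composition factors are all isomorphic to $S$, and torsion class $\mathcal{T}=\{X\in\mathcal{A}\mid\mathrm{Hom}(X,S)=0\}$; finiteness of length makes this a torsion pair, and rigidity of $S$ forces $\langle S\rangle=\mathrm{add}(S)$, a semisimple subcategory. As recalled above, $\mathcal{A}^\sharp_S$ then carries the torsion pair $(\langle S\rangle[1],\mathcal{T})$, and dually $\mathcal{A}^\flat_S$ is the tilt at the torsion pair with torsion class $\langle S\rangle$ and torsion-free class $\mathcal{F}'=\{X\in\mathcal{A}\mid\mathrm{Hom}(S,X)=0\}$. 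Two facts will be used repeatedly: $\langle S\rangle\cap\mathcal{T}=\langle S\rangle\cap\mathcal{F}'=0$, and a morphism in a heart is a monomorphism exactly when its cone lies in the heart.

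First I would show $S[1]\in\mathrm{Sim}(\mathcal{A}^\sharp_S)$. Given a monomorphism $A\hookrightarrow S[1]$ in $\mathcal{A}^\sharp_S$ with cokernel $B$, applying the cohomology functors of the t-structure of $\mathcal{A}$ and using $H^{-1}(S[1])=S$, $H^0(S[1])=0$ produces an exact sequence $0\to H^{-1}(A)\to S\to H^{-1}(B)\to H^0(A)\to 0$ in $\mathcal{A}$ with $H^{-1}(A),H^{-1}(B)\in\langle S\rangle$ and $H^0(A)\in\mathcal{T}$. Since $S$ is simple, $H^{-1}(A)$ is $0$ or $S$; in the first case the sequence together with $\langle S\rangle\cap\mathcal{T}=0$ forces $A=0$, and in the second it forces $H^{-1}(B)=H^0(A)=0$, i.e. $A=S[1]$. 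The same computation with degrees shifted gives $S[-1]\in\mathrm{Sim}(\mathcal{A}^\flat_S)$.

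Next I would unwind the cone defining $\Phi^\sharp_S(M)$ to see that it fits in a short exact sequence $0\to\mathrm{Ext}^1(M,S)^\vee\otimes S\to\Phi^\sharp_S(M)\to M\to 0$ in $\mathcal{A}$ presenting the universal extension of $M$ by copies of $S$; universality makes the connecting map $\mathrm{Hom}(\mathrm{Ext}^1(M,S)^\vee\otimes S,S)\to\mathrm{Ext}^1(M,S)$ an isomorphism, so $\mathrm{Hom}(\Phi^\sharp_S(M),S)=0$ and hence $\Phi^\sharp_S(M)\in\mathcal{T}\subset\mathcal{A}^\sharp_S$. For simplicity, take a monomorphism $A\hookrightarrow\Phi^\sharp_S(M)$ in $\mathcal{A}^\sharp_S$ with cokernel $B\in\mathcal{A}^\sharp_S$. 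The cohomology sequence forces $H^{-1}(A)=0$, so $A\in\mathcal{T}$, and then $\ker(A\to\Phi^\sharp_S(M))=H^{-1}(B)\in\langle S\rangle\cap\mathcal{T}=0$; thus $A\hookrightarrow\Phi^\sharp_S(M)$ already in $\mathcal{A}$, with cokernel $H^0(B)\in\mathcal{T}$. Because $M$ is simple, $A$ maps to $0$ or onto $M$: in the former case $A\subseteq\mathrm{Ext}^1(M,S)^\vee\otimes S\in\langle S\rangle$ gives $A=0$; in the latter $\Phi^\sharp_S(M)/A$ is a quotient of $\mathrm{Ext}^1(M,S)^\vee\otimes S$, so it lies in $\langle S\rangle\cap\mathcal{T}=0$ and $A=\Phi^\sharp_S(M)$. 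The backward statement follows by the dual argument, noting that $\Phi^\flat_S(M)$ realizes the couniversal extension $0\to M\to\Phi^\flat_S(M)\to\mathrm{Ext}^1(S,M)\otimes S\to 0$ and lies in $\mathcal{F}'\subset\mathcal{A}^\flat_S$.

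It remains to see that $\mathcal{A}^\sharp_S$ is of finite length with no further simples. I would let $\mathcal{C}\subseteq\mathcal{A}^\sharp_S$ be the extension closure of $\{S[1]\}\cup\{\Phi^\sharp_S(M)\mid S\neq M\in\mathrm{Sim}(\mathcal{A})\}$ and show $\mathcal{C}=\mathcal{A}^\sharp_S$. Since $\langle S\rangle=\mathrm{add}(S)$, the torsion part $\langle S\rangle[1]$ of $\mathcal{A}^\sharp_S$ lies in $\mathcal{C}$, so it suffices to put each $X\in\mathcal{T}$ into $\mathcal{C}$; this goes by induction on the $\mathcal{A}$-length of $X$: peel off a simple quotient $N$ of $X$ in $\mathcal{A}$, which satisfies $N\not\cong S$ and $N\in\mathcal{T}$ because $\mathrm{Hom}(X,S)=0$, check that $0\to X'\to X\to N\to 0$ is exact in $\mathcal{A}^\sharp_S$ as well, and observe that $N$ itself lies in $\mathcal{C}$ since the surjection $\Phi^\sharp_S(N)\to N$ is a monomorphism in $\mathcal{A}^\sharp_S$ with cokernel $(S[1])^{\oplus\dim\mathrm{Ext}^1(N,S)}$. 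Finally, $S[1]\notin\mathcal{A}$ while every $\Phi^\sharp_S(M)\in\mathcal{A}$, and $M$ is recovered as the unique simple-in-$\mathcal{A}$ quotient of $\Phi^\sharp_S(M)$ not isomorphic to $S$, so the listed objects are pairwise non-isomorphic and $\lvert\mathrm{Sim}(\mathcal{A}^\sharp_S)\rvert=\lvert\mathrm{Sim}(\mathcal{A})\rvert$; the backward case is identical. I expect the main obstacle to be exactly this finite-length step — gaining enough control of the subobject lattice of $\mathcal{A}^\sharp_S$ — where the finite-dimensionality of $\mathrm{Ext}^1(M,S)$ (so that the universal extension is an honest object) and the rigidity of $S$ (so that $\langle S\rangle$ is semisimple) are both indispensable; recognizing $\Phi^\sharp_S(M)$ as a universal extension inside $\mathcal{T}$ is the key input that drives it.
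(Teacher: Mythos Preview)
The paper does not supply its own proof of this proposition; it is quoted from \cite{KQ} without argument, so there is no proof in the paper to compare against. Your outline is the standard one, and the simplicity arguments for $S[1]$ and $\Phi^\sharp_S(M)$, as well as the identification of $\Phi^\sharp_S(M)$ with the universal extension of $M$ by copies of $S$, are correct.

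There is, however, a genuine gap in your finite-length step. The claim that the $\mathcal{A}$-exact sequence $0\to X'\to X\to N\to 0$ remains short exact in $\mathcal{A}^\sharp_S$ is false in general: the torsion class $\mathcal{T}=\{X\in\mathcal{A}\mid\mathrm{Hom}(X,S)=0\}$ is not closed under subobjects, so $X'$ need not lie in $\mathcal{T}$, hence need not lie in $\mathcal{A}^\sharp_S$ at all. Concretely, for the path algebra of $1\to 2$ with $S=S_2$ and $X=P_1$ (the indecomposable projective with top $S_1$ and socle $S_2$), one has $X\in\mathcal{T}$ with unique simple quotient $N=S_1$, but $X'=S_2\notin\mathcal{T}$; in $\mathcal{A}^\sharp_S$ the map $X\to N$ is a \emph{monomorphism} with cokernel $S_2[1]$, not an epimorphism. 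So neither the exactness claim nor the application of the induction hypothesis to $X'$ (which requires $X'\in\mathcal{T}$) is justified.

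The repair is close to what you wrote. Take the torsion decomposition $0\to T'\to X'\to F'\to 0$ with $T'\in\mathcal{T}$ and $F'\in\mathrm{add}(S)$; the long exact sequence of $\mathcal{A}^\sharp_S$-cohomology applied to the triangle $X'\to X\to N$ gives a four-term exact sequence
\[
0 \to T' \to X \to N \to F'[1] \to 0
\]
in $\mathcal{A}^\sharp_S$. Now $T'\in\mathcal{T}$ has strictly smaller $\mathcal{A}$-length than $X$, so $T'\in\mathcal{C}$ by induction, and $X/T'$ (in $\mathcal{A}^\sharp_S$) is a subobject of $N$. You already established the filtration $0\subset\Phi^\sharp_S(N)\subset N$ with top quotient $(S[1])^{\oplus\dim\mathrm{Ext}^1(N,S)}$, so $N$ has finite $\mathcal{A}^\sharp_S$-length with all composition factors in your list; hence every subobject of $N$ lies in $\mathcal{C}$, and therefore so does $X$. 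With this correction the argument goes through.
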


\subsection{Categorical systole and volume}\label{sec3}

Let $(X,\omega,\Omega)$ be a compact Calabi--Yau manifold with a symplectic form $\omega$ and a holomorphic volume form $\Omega$.
The systole of $(X,\omega,\Omega)$ can be defined as
\begin{equation*}
\mathrm{sys}(X,\omega,\Omega) = \inf \left\{\left. \left\lvert \int_L \Omega \right\rvert \,\right|\, L \text{ is a special Lagrangian submanifold in } (X,\omega,\Omega) \right\}.
\end{equation*}
In view of this observation and the conjectural description of stability conditions on Fukaya categories \cite{Bri2,Joy} mentioned in the introduction, Fan \cite{Fan} introduced a categorical analogue of systole.

\begin{dfn}[\cite{Fan}]\label{dfn2.5}
The {\em categorical systole} of $\sigma = (Z,\mathcal{P}) \in \mathrm{Stab}(\mathcal{D})$ is defined by
\begin{equation*}
\mathrm{sys}(\sigma) = \inf\{\lvert Z(E) \rvert \,|\, E \text{ is a $\sigma$-stable object of } \mathcal{D}\}.
\end{equation*}
\end{dfn}

\begin{rmk}
By the condition Definition \ref{dfn2.1} (5), $\mathrm{sys}(\sigma) > 0$ for any $\sigma \in \mathrm{Stab}(\mathcal{D})$.
\end{rmk}

On the other hand, the volume of $(X,\omega,\Omega)$ is given by
\begin{equation*}
\mathrm{vol}(X,\omega,\Omega) = \left\lvert \int_X \Omega \wedge \overline{\Omega} \right\rvert.
\end{equation*}
For a basis $L_1,\dots,L_k$ of $H_d(X,\mathbb{Z})/\mathrm{Torsion}$ (where $d = \dim_\mathbb{C} X$), this can be rewritten as
\begin{equation*}
\mathrm{vol}(X,\omega,\Omega) = \left\lvert \sum_{i,j=1}^k \gamma^{ij} \int_{L_i} \Omega \int_{L_j} \overline{\Omega} \right\rvert
\end{equation*}
where $\gamma^{ij}$ is the $(i,j)$-component of the inverse matrix of the intersection matrix $(L_i \cdot L_j)_{i,j}$.
This leads to the following definition.

\begin{dfn}[\cite{FKY}]\label{dfn2.7}
Fix a basis $E_1,\dots,E_k$ of $\mathcal{N}(\mathcal{D})$.
The {\em categorical volume} of $\sigma = (Z,\mathcal{P}) \in \mathrm{Stab}(\mathcal{D})$ is defined by
\begin{equation*}
\mathrm{vol}(\sigma) = \left\lvert \sum_{i,j=1}^k \chi^{ij} Z(E_i) \overline{Z(E_j)} \right\rvert
\end{equation*}
where $\chi^{ij}$ is the $(i,j)$-component of the inverse matrix of the matrix $(\chi(E_i,E_j))_{i,j}$.
\end{dfn}

The following can be easily checked.

\begin{lem}[{\cite[Lemmas 2.7 and 2.12]{Fan}}]\label{lem2.8}
For $\Phi \in \mathrm{Auteq}(\mathcal{D})$, $\zeta \in \mathbb{C}$ and $\sigma \in \mathrm{Stab}(\mathcal{D})$,
\begin{itemize}
\item[(1)] $\mathrm{sys}(\Phi \cdot \sigma) = \mathrm{sys}(\sigma)$;
\item[(2)] $\mathrm{sys}(\sigma \cdot \zeta) = e^{\pi \mathrm{Im}(\zeta)} \mathrm{sys}(\sigma)$;
\item[(3)] $\mathrm{vol}(\Phi \cdot \sigma) = \mathrm{vol}(\sigma)$;
\item[(4)] $\mathrm{vol}(\sigma \cdot \zeta) = e^{2\pi \mathrm{Im}(\zeta)} \mathrm{vol}(\sigma)$.
\end{itemize}
\end{lem}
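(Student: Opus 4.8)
The plan is to unwind the two group actions on $\mathrm{Stab}(\mathcal{D})$ and track what happens to the set of $\sigma$-stable objects and to the central charge. In each case one action either permutes the stable objects through a bijection (an autoequivalence) or leaves the underlying set of stable objects untouched while merely relabelling phases, and in both cases the central charge gets multiplied by a fixed scalar of known modulus; the infimum defining $\mathrm{sys}$ and the quadratic expression defining $\mathrm{vol}$ then transform accordingly.

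For (1), I would first note that if $\Phi \in \mathrm{Auteq}(\mathcal{D})$ then $\Phi$ restricts to an equivalence of abelian categories $\mathcal{P}(\phi) \xrightarrow{\ \sim\ } \Phi(\mathcal{P})(\phi)$ for every $\phi$, hence $E$ is $\sigma$-stable of phase $\phi$ if and only if $\Phi(E)$ is $(\Phi\cdot\sigma)$-stable of phase $\phi$; since moreover $(Z\circ\Phi^{-1})(\Phi(E)) = Z(E)$, the two sets $\{\,\lvert Z(E)\rvert : E\ \sigma\text{-stable}\,\}$ and $\{\,\lvert Z'(E')\rvert : E'\ (\Phi\cdot\sigma)\text{-stable}\,\}$ coincide, and so do their infima. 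For (2), the equality $\mathcal{P}[\mathrm{Re}(\zeta)](\phi) = \mathcal{P}(\phi+\mathrm{Re}(\zeta))$ shows that the underlying set of stable objects of $\sigma\cdot\zeta$ equals that of $\sigma$ (only the phase labels shift), while the central charge becomes $e^{-i\pi\zeta}Z$; writing $\zeta = \mathrm{Re}(\zeta)+i\,\mathrm{Im}(\zeta)$ gives $\lvert e^{-i\pi\zeta}\rvert = e^{\pi\,\mathrm{Im}(\zeta)}$, so every $\lvert Z(E)\rvert$ is scaled by this factor and (2) follows by taking the infimum.

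For the volume statements I would begin by recording that $\mathrm{vol}(\sigma)$ is independent of the chosen basis $E_1,\dots,E_k$ of $\mathcal{N}(\mathcal{D})$: a change of basis given by an invertible integer matrix $P$ replaces the Euler matrix $X = (\chi(E_i,E_j))_{i,j}$ by $PXP^{\mathrm T}$ and the vector $(Z(E_i))_i$ by $P(Z(E_i))_i$, and because $P$ has real entries these two changes cancel inside $\sum_{i,j}\chi^{ij}Z(E_i)\overline{Z(E_j)}$. Given this, (3) follows by evaluating $\mathrm{vol}(\Phi\cdot\sigma)$ in the transported basis $\Phi(E_1),\dots,\Phi(E_k)$: an exact equivalence preserves the Euler form, so $\chi(\Phi(E_i),\Phi(E_j)) = \chi(E_i,E_j)$, while $(Z\circ\Phi^{-1})(\Phi(E_i)) = Z(E_i)$, making the two defining expressions literally equal. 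Statement (4) is immediate: $Z\mapsto e^{-i\pi\zeta}Z$ multiplies each term $Z(E_i)\overline{Z(E_j)}$ by $e^{-i\pi\zeta}\,\overline{e^{-i\pi\zeta}} = \lvert e^{-i\pi\zeta}\rvert^2 = e^{2\pi\,\mathrm{Im}(\zeta)}$, which factors out of the sum and the modulus, while the coefficients $\chi^{ij}$ are unaffected.

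I do not expect a serious obstacle; the only point that needs to be pinned down before (3) makes sense is the basis-independence of $\mathrm{vol}$, together with the routine bookkeeping that an exact autoequivalence respects both the Euler pairing and the slicing $\mathcal{P}$.
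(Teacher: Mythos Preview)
Your proposal is correct and complete; it is precisely the routine verification the paper has in mind when it says just before the lemma that ``the following can be easily checked'' and then states the result with a citation to Fan without further proof. The only extra ingredient you supply, the basis-independence of $\mathrm{vol}$, is also standard and needed to make sense of (3), so nothing is missing.
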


\begin{cor}\label{cor2.9}
The map $\mathrm{vol}/\mathrm{sys}^2$ defines a map from $\mathrm{Auteq}(\mathcal{D}) \backslash \mathrm{Stab}(\mathcal{D}) / \mathbb{C}$ to $[0,\infty)$.
\end{cor}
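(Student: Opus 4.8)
The plan is to derive the statement directly from Lemma~\ref{lem2.8} together with the strict positivity of the systole; there is essentially no content beyond tracking homogeneity degrees. First I would check that $\sigma \mapsto \mathrm{vol}(\sigma)/\mathrm{sys}(\sigma)^2$ is a well-defined element of $[0,\infty)$ for every $\sigma \in \mathrm{Stab}(\mathcal{D})$: the numerator $\mathrm{vol}(\sigma) = \lvert \sum_{i,j} \chi^{ij} Z(E_i) \overline{Z(E_j)} \rvert$ is a finite non-negative real number since $Z$ takes finite values and $\mathrm{rk}\,\mathcal{N}(\mathcal{D}) < \infty$, while by the Remark following Definition~\ref{dfn2.5} (a consequence of axiom~(5) of Definition~\ref{dfn2.1}) one has $\mathrm{sys}(\sigma) > 0$, so $\mathrm{sys}(\sigma)^2$ is a positive real number and the quotient is well-defined and lies in $[0,\infty)$. (If $\mathcal{D}$ has no $\sigma$-stable objects one reads $\mathrm{sys}(\sigma) = +\infty$ and the quotient as $0$, still in $[0,\infty)$.)

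Next I would verify invariance under the two group actions. For $\Phi \in \mathrm{Auteq}(\mathcal{D})$, parts~(1) and~(3) of Lemma~\ref{lem2.8} give $\mathrm{sys}(\Phi\cdot\sigma) = \mathrm{sys}(\sigma)$ and $\mathrm{vol}(\Phi\cdot\sigma) = \mathrm{vol}(\sigma)$, so
\[
\frac{\mathrm{vol}(\Phi\cdot\sigma)}{\mathrm{sys}(\Phi\cdot\sigma)^2} = \frac{\mathrm{vol}(\sigma)}{\mathrm{sys}(\sigma)^2}.
\]
For $\zeta \in \mathbb{C}$, parts~(2) and~(4) of Lemma~\ref{lem2.8} give $\mathrm{sys}(\sigma\cdot\zeta)^2 = e^{2\pi\mathrm{Im}(\zeta)}\,\mathrm{sys}(\sigma)^2$ and $\mathrm{vol}(\sigma\cdot\zeta) = e^{2\pi\mathrm{Im}(\zeta)}\,\mathrm{vol}(\sigma)$, and the exponential factors cancel in the quotient:
\[
\frac{\mathrm{vol}(\sigma\cdot\zeta)}{\mathrm{sys}(\sigma\cdot\zeta)^2} = \frac{\mathrm{vol}(\sigma)}{\mathrm{sys}(\sigma)^2}.
\]
This is also where the exponent $2$ on $\mathrm{sys}$ is forced: it is the unique power matching the homogeneity of $\mathrm{vol}$ under the $\mathbb{C}$-action.

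Finally I would observe that the left $\mathrm{Auteq}(\mathcal{D})$-action and the right $\mathbb{C}$-action on $\mathrm{Stab}(\mathcal{D})$ commute --- from the formulas $\Phi\cdot(Z,\mathcal{P}) = (Z\circ\Phi^{-1},\Phi(\mathcal{P}))$ and $(Z,\mathcal{P})\cdot\zeta = (e^{-i\pi\zeta}Z,\mathcal{P}[\mathrm{Re}(\zeta)])$ one checks $(\Phi\cdot\sigma)\cdot\zeta = \Phi\cdot(\sigma\cdot\zeta)$ directly, since $\Phi$ only transports objects and slicings while $\zeta$ only reindexes phases --- so the double coset space $\mathrm{Auteq}(\mathcal{D})\backslash\mathrm{Stab}(\mathcal{D})/\mathbb{C}$ is meaningful, and the two invariances just shown say exactly that $\mathrm{vol}/\mathrm{sys}^2$ is constant along each double coset. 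Hence it descends to a map $\mathrm{Auteq}(\mathcal{D})\backslash\mathrm{Stab}(\mathcal{D})/\mathbb{C} \to [0,\infty)$. I do not expect any obstacle here; the whole statement is a formal corollary of Lemma~\ref{lem2.8}, the only mildly delicate point being the harmless convention at stability conditions with no stable objects.
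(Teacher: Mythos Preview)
Your proposal is correct and matches the paper's approach exactly: the paper states Corollary~\ref{cor2.9} without proof, treating it as an immediate consequence of Lemma~\ref{lem2.8}, and your argument simply spells out that deduction (well-definedness from $\mathrm{sys}>0$, invariance from the four parts of the lemma). The only remark is that your parenthetical about stability conditions with no stable objects is unnecessary here, since axiom~(4) of Definition~\ref{dfn2.1} guarantees the existence of semistable objects and hence of stable ones (the simple objects of $\mathcal{P}(\phi)$), so $\mathrm{sys}(\sigma)$ is always a finite positive number.
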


\section{Categorical systolic inequalities}

\subsection{Ginzburg dg algebras}

Let $Q$ be an ADE quiver, i.e., a quiver whose underlying graph is an ADE graph.
Let $Q_0 = \{1,\dots,n\}$ be the set of vertices of $Q$ and $Q_1$ be the set of arrows of $Q$.
The {\em 2-Calabi--Yau Ginzburg dg algebra} $\Gamma_Q = (\mathbb{K}\widehat{Q},d)$ associated to $Q$ is defined as follows \cite{Gin}.
First, $\mathbb{K}\widehat{Q}$ is the graded path algebra of the extended quiver $\widehat{Q}$ with vertices $\widehat{Q}_0 = Q_0$ with the following arrows:
\begin{itemize}
\item the original arrow $a \in Q_1$ (degree $0$);
\item the opposite arrow $a^* : j \to i$ for each $a : i \to j \in Q_1$ (degree $0$);
\item a loop $t_i : i \to i$ for each $i \in Q_0$ (degree $-1$).
\end{itemize}
The differential $d : \mathbb{K}\widehat{Q} \to \mathbb{K}\widehat{Q}$ is then defined by
\begin{itemize}
\item $da = da^* = 0$ for every $a \in Q_1$;
\item $dt_i = \sum_{a \in Q_1} e_i(aa^*-a^*a)e_i$ where $e_i$ denotes the constant path at $i \in Q_0$.
\end{itemize}

Let $D(\Gamma_Q)$ be the derived category of dg modules over $\Gamma_Q$.
The finite-dimensional derived category $\mathcal{D}_Q = D_\mathrm{fd}(\Gamma_Q)$ is defined to be the full triangulated subcategory of $D(\Gamma_Q)$ whose objects consist of dg modules $M$ such that $\mathrm{dim}\, H^*(M) < \infty$.

\begin{thm}[{\cite[Theorem 6.3]{Kel}}]
The category $\mathcal{D}_Q$ is 2-Calabi--Yau, i.e., there is an isomorphism
\begin{equation*}
\mathrm{Hom}(M,N) \cong \mathrm{Hom}(N,M[2])^\vee
\end{equation*}
which is functorial in both $M,N \in \mathcal{D}_Q$.
\end{thm}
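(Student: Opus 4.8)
The plan is to deduce the triangulated $2$-Calabi--Yau property of $\mathcal{D}_Q = D_\mathrm{fd}(\Gamma_Q)$ from a \emph{bimodule} Calabi--Yau property of the dg algebra $\Gamma_Q$ itself. First I would identify $\Gamma_Q$ with the $2$-Calabi--Yau completion $\Pi_2(\mathbb{K}Q)$ of the path algebra $A := \mathbb{K}Q$. Since $Q$ is an ADE quiver it is finite and acyclic with no relations, so $A$ is hereditary, hence homologically smooth, and its inverse dualizing bimodule $\Theta_A := \mathrm{RHom}_{A^e}(A, A^e)$ is represented by the two-term complex $A \otimes_R A \to A \otimes_R V^\vee \otimes_R A$ placed in cohomological degrees $0$ and $1$, where $R = \bigoplus_{i \in Q_0} \mathbb{K}e_i$ and $V$ is the arrow bimodule of $Q$. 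The tensor dg algebra $T_A(\Theta_A[1])$ then has, over $R$, exactly the generators $e_i$ (degree $0$), $a$ (degree $0$, from $V$), $a^*$ (degree $0$, from $V^\vee$) and $t_i$ (degree $-1$), and its differential is $da = da^* = 0$ together with $dt_i = \sum_{a \in Q_1} e_i(aa^* - a^*a)e_i$; this recovers $\Gamma_Q$ verbatim.

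Granting this identification, the key input is Keller's theorem that the $n$-Calabi--Yau completion of a homologically smooth dg algebra is again homologically smooth and carries a canonical bimodule $n$-Calabi--Yau structure. For $n = 2$ this provides a bimodule isomorphism $\mathrm{RHom}_{\Gamma_Q^e}(\Gamma_Q, \Gamma_Q^e) \cong \Gamma_Q[-2]$ (where $\Gamma_Q^e = \Gamma_Q \otimes_\mathbb{K} \Gamma_Q^{\mathrm{op}}$ is the enveloping dg algebra), which I would realize via the canonical self-dual cofibrant bimodule resolution $\mathbf{P}$ of $\Gamma_Q$ built from the generators of $\widehat{Q}$, of shape
\begin{equation*}
\Gamma_Q \otimes_R R \otimes_R \Gamma_Q \to \Gamma_Q \otimes_R (V \oplus V^\vee) \otimes_R \Gamma_Q \to \Gamma_Q \otimes_R R \otimes_R \Gamma_Q \to \Gamma_Q,
\end{equation*}
the two outer bimodule terms being indexed by the vertices (coming respectively from the augmentation and from the loops $t_i$) and the middle one by the doubled arrows. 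Applying $\mathrm{RHom}_{\Gamma_Q^e}(-, \Gamma_Q^e)$ interchanges the two outer terms, fixes the middle one via the tautological pairing $a \leftrightarrow a^*$, and matches the differentials up to sign thanks to the antisymmetry of $dt_i$; the outcome is $\mathbf{P}[-2]$, which is the desired bimodule $2$-Calabi--Yau isomorphism.

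From here the triangulated statement follows by the standard derived Morita argument. For $M, N \in \mathcal{D}_Q$, whose total cohomologies are finite-dimensional over $\mathbb{K}$, one has functorial isomorphisms in $D(\mathbb{K})$
\begin{equation*}
\mathrm{RHom}_{\Gamma_Q}(M, N) \cong \mathrm{RHom}_{\Gamma_Q^e}\bigl(\Gamma_Q, \mathrm{RHom}_{\mathbb{K}}(M, N)\bigr) \cong \bigl(\Gamma_Q \otimes^{\mathbf{L}}_{\Gamma_Q^e} \mathrm{RHom}_{\mathbb{K}}(M, N)\bigr)[-2] \cong \mathrm{RHom}_{\Gamma_Q}(N, M)^\vee[-2],
\end{equation*}
where the first isomorphism is the identity $\mathrm{HH}^*\bigl(\Gamma_Q, \mathrm{RHom}_{\mathbb{K}}(M, N)\bigr) \cong \mathrm{Ext}^*_{\Gamma_Q}(M, N)$, the second substitutes $\Gamma_Q \cong \mathrm{RHom}_{\Gamma_Q^e}(\Gamma_Q, \Gamma_Q^e)[2]$ and uses that homological smoothness makes $\Gamma_Q$ perfect over $\Gamma_Q^e$, and the third rewrites $\Gamma_Q \otimes^{\mathbf{L}}_{\Gamma_Q^e} \mathrm{RHom}_{\mathbb{K}}(M, N) \cong M^\vee \otimes^{\mathbf{L}}_{\Gamma_Q} N$ and then applies Hom--tensor duality $M^\vee \otimes^{\mathbf{L}}_{\Gamma_Q} N \cong \mathrm{RHom}_{\Gamma_Q}(N, M)^\vee$ --- all steps being legitimate because $M$ and $N$ are perfect over $\mathbb{K}$. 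Taking $H^0$ yields $\mathrm{Hom}_{\mathcal{D}_Q}(M, N) \cong \mathrm{Hom}_{\mathcal{D}_Q}(N, M[2])^\vee$, functorially in $M$ and $N$, which is the assertion.

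\textbf{Main obstacle.} The crux is the second paragraph: verifying that $\mathbf{P}$ is genuinely self-dual up to the shift $[-2]$, i.e.\ that $\Gamma_Q$ carries an \emph{untwisted} bimodule $2$-Calabi--Yau structure. This is delicate because $\Gamma_Q$ is infinite-dimensional and its zeroth cohomology, the ordinary preprojective algebra of $Q$, is in the ADE case only \emph{twisted} $2$-Calabi--Yau (with Nakayama automorphism the nontrivial graph involution for $A_{\geq 2}$, $D_{\mathrm{odd}}$ and $E_6$). The resolution of this tension is that the \emph{derived} doubling of the quiver --- which is exactly what distinguishes $\Pi_2(\mathbb{K}Q)$ from the naive preprojective algebra in the Dynkin case --- produces the extra negative-degree cohomology of $\Gamma_Q$ that absorbs the twist, while the antisymmetric sign pattern $aa^* - a^*a$ in $dt_i$ is what forces the dual of $\mathbf{P}$ to coincide with $\mathbf{P}$ up to the shift. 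Everything else is formal; alternatively one may simply invoke \cite[Theorem 6.3]{Kel} together with the identification $\Gamma_Q \simeq \Pi_2(\mathbb{K}Q)$.
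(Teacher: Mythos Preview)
The paper does not prove this statement at all; it simply records it with the citation to \cite[Theorem~6.3]{Kel} and moves on, so there is no ``paper's own proof'' to compare against. Your sketch is a faithful outline of Keller's argument in the cited reference: identify $\Gamma_Q$ with the deformed Calabi--Yau completion $\Pi_2(\mathbb{K}Q)$, exhibit the bimodule $2$-Calabi--Yau structure via the self-dual cofibrant bimodule resolution, and then pass to the triangulated Serre duality on $D_{\mathrm{fd}}$ by the standard Hochschild/Hom--tensor manipulation. Your ``main obstacle'' paragraph correctly flags the genuine subtlety --- that the ordinary (degree-zero) preprojective algebra is only \emph{twisted} $2$-CY in Dynkin type, whereas the full dg algebra $\Gamma_Q$ is honestly $2$-CY --- and locates the fix in the extra negative-degree part of $\Gamma_Q$; this is accurate. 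In short, your proposal is correct and is essentially what one finds by unpacking the reference, but be aware that for the purposes of this paper the theorem is treated as a black box.
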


There are $n$ simple dg modules $S_1,\dots,S_n$ corresponding to each of the $n$ vertices of $Q$.
It turns out that they generate $\mathcal{D}_Q$ and their configuration can be described as follows.

\begin{prop}[{\cite[Lemma 2.15]{KY}}]\label{prop3.2}
The category $\mathcal{D}_Q$ is generated by $S_1,\dots,S_n$ and
\begin{equation*}
\mathrm{Hom}^*(S_i,S_j) =
\begin{cases}
\mathbb{K} \oplus \mathbb{K}[-2] & (i=j)\\
\mathbb{K}[-1] & (i \sim j \text{ in } Q)\\
0 & (\text{otherwise})
\end{cases}
\end{equation*}
where $i \sim j$ in $Q$ if and only if $i$ is adjacent to $j$ in $Q$.
\end{prop}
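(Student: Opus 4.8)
The plan is to compute the graded endomorphism algebra of the simple dg modules directly from the definition of the Ginzburg dg algebra $\Gamma_Q$, and then deduce generation. The simple module $S_i$ is the one-dimensional $\Gamma_Q$-module supported at the vertex $i$, on which all arrows of $\widehat{Q}$ act by zero. To compute $\mathrm{Hom}^*_{\mathcal{D}_Q}(S_i,S_j)$ one first builds a cofibrant (e.g. minimal semifree) resolution $P_i \to S_i$ over $\Gamma_Q$. Because $\Gamma_Q$ is, as a graded algebra, the path algebra of $\widehat{Q}$, the resolution is governed by the Koszul-type complex whose generators in homological degrees $0,-1,-2,-3,\dots$ correspond to the vertex $e_i$, the arrows into/out of $i$ (both the degree-$0$ arrows $a,a^*$ and the degree-$(-1)$ loop $t_i$), and the relations $dt_i$; for an ADE quiver, which has no oriented cycles and hence finite global dimension issues are controlled, this complex terminates. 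I would therefore first write down $P_i$ explicitly in low degrees: $P_i$ has one generator in degree $0$, generators indexed by the arrows of $\widehat{Q}$ at $i$ in degree one lower, and so on, with the differential encoding $d t_i = \sum_a e_i(aa^*-a^*a)e_i$.

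Next I would apply $\mathrm{Hom}_{\Gamma_Q}(P_i, S_j)$ and take cohomology. The key computational input is that, since $S_j$ kills all arrows, $\mathrm{Hom}_{\Gamma_Q}(P_i,S_j)$ in each degree is just the vector space spanned by those generators of $P_i$ that "start and end appropriately at $j$," with \emph{all} induced differentials vanishing (every differential in $P_i$ is built from arrows, which act as zero on $S_j$). Hence $\mathrm{Hom}^k_{\mathcal{D}_Q}(S_i,S_j)$ is literally the space of degree-$(-k)$ generators of $P_i$ landing at $j$. Counting these: in degree $0$ we get $\mathbb K$ iff $i=j$; in degree $1$ we get one copy of $\mathbb K$ for each arrow between $i$ and $j$ in $\widehat Q$ of degree $0$, i.e. iff $i\sim j$ (since for each edge of $Q$ there is exactly one arrow in each direction, but only one of $a,a^*$ contributes to a given ordered pair — I would track the orientations carefully here); in degree $2$ we get $\mathbb K$ iff $i=j$, coming from the loop $t_i$ together with the relation term; and all higher degrees vanish for ADE quivers. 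The self-Hom $\mathbb K \oplus \mathbb K[-2]$ is of course also forced by the 2-Calabi--Yau property together with $\mathrm{Hom}^0(S_i,S_i)=\mathbb K$ (Schur's lemma, as $\Gamma_Q$ has a simple head over $\mathbb K$), which gives a useful cross-check; similarly $\mathrm{Hom}^1(S_i,S_j)\cong \mathrm{Hom}^1(S_j,S_i)^\vee$ confirms the symmetry of the adjacency relation.

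For the generation statement, I would argue that the thick subcategory generated by $S_1,\dots,S_n$ contains $\Gamma_Q$ itself as a dg module: the simples are obtained from the free modules $e_i\Gamma_Q$ by the finite filtration coming from the (finite-dimensional in each degree, bounded) structure of $\Gamma_Q e_i$, or dually the $e_i\Gamma_Q$ are built from the $S_j$ by iterated extensions, because $H^*(\Gamma_Q)$ is finite-dimensional for ADE type. Since $\mathcal{D}_Q = D_{\mathrm{fd}}(\Gamma_Q)$ is generated as a triangulated category by the compact generators together with the finiteness condition, and every finite-dimensional dg module has a finite composition series with simple subquotients among $S_1,\dots,S_n$, the $S_i$ generate. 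The main obstacle I anticipate is the bookkeeping in the explicit resolution $P_i$: one must be careful that the extra loop $t_i$ in degree $-1$ and the relation $dt_i$ in effective degree $-2$ conspire so that the complex is eventually acyclic (equivalently, that $\mathrm{Hom}^k(S_i,S_j)=0$ for $k\ge 3$), and verifying this cleanly is really where the ADE hypothesis enters — for a quiver with an oriented cycle or of non-ADE type the analogous complex would not terminate. Alternatively, one could bypass the explicit resolution entirely by invoking the known quasi-isomorphism, for ADE $Q$, between $\Gamma_Q$ and the (formal) trivial extension / preprojective-type algebra whose Ext-algebra on simples is classical, but I would prefer the direct computation to keep the argument self-contained.
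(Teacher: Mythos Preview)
The paper does not prove this proposition at all; it simply records it with a citation to Keller--Yang \cite[Lemma~2.15]{KY}. Your approach---build a cofibrant (semifree) resolution $P_i\to S_i$, apply $\mathrm{Hom}_{\Gamma_Q}(-,S_j)$, observe that all induced differentials vanish because $S_j$ kills arrows, and count generators---is precisely the method used in the cited reference, so in that sense your proposal is aligned with the literature the paper defers to.

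That said, one point in your sketch is off. You locate the ADE hypothesis in the \emph{termination} of the resolution (``verifying this cleanly is really where the ADE hypothesis enters''), but this is not where it enters. The Ginzburg dg algebra $\Gamma_Q$ is homologically smooth of dimension~$2$ for \emph{any} quiver $Q$; consequently each $S_i$ admits a cofibrant resolution with generators only in cohomological degrees $0,1,2$, and $\mathrm{Hom}^k(S_i,S_j)=0$ for $k\notin\{0,1,2\}$ always. The role of the hypothesis here is only that $Q$ is simply laced, so that there is at most one edge between any two vertices and hence $\dim\mathrm{Hom}^1(S_i,S_j)\le 1$. Likewise, generation of $\mathcal{D}_Q$ by the $S_i$ does not require ADE and does not require showing $\Gamma_Q\in\mathcal{D}_Q$: the canonical bounded $t$-structure already gives every object of $D_{\mathrm{fd}}(\Gamma_Q)$ as a finite iterated extension of shifts of objects in the heart, and every object of the heart is a finite-length module with simple subquotients among the $S_i$. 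Your alternative route via finite-dimensionality of $H^*(\Gamma_Q)$ (the preprojective algebra) is correct for ADE but unnecessary.
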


Let $\mathcal{D}^{\leq 0}_\mathrm{can} \subset \mathcal{D}_Q$ be the full subcategory consisting of dg modules $M$ with $H^i(M) = 0$ for all $i > 0$.
This determines a bounded t-structure and its heart $\mathcal{A}_\mathrm{can}$ is of finite length and $\mathrm{Sim}(\mathcal{A}_\mathrm{can}) = \{S_1,\dots,S_n\}$.

Let us denote by $\mathrm{Stab}^\circ(\mathcal{D}_Q) \subset \mathrm{Stab}(\mathcal{D}_Q)$ the connected component containing $\mathrm{Stab}_{\mathcal{A}_\mathrm{can}}(\mathcal{D}_Q)$.
The following can be proved using \cite[Theorem 2.12]{Woo}.

\begin{prop}[{\cite[Corollary 5.3]{Qiu}}]\label{prop3.3}
For an ADE quiver $Q$,
\begin{equation*}
\mathrm{Stab}^\circ(\mathcal{D}_Q) = \bigcup_\mathcal{A} \mathrm{Stab}_\mathcal{A}(\mathcal{D}_Q)
\end{equation*}
where the union is over all hearts $\mathcal{A}$ obtained from $\mathcal{A}_\mathrm{can}$ by iterated simple forward/backward tilts.
\end{prop}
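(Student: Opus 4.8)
The plan is to show that the set $U:=\bigcup_\mathcal{A}\mathrm{Stab}_\mathcal{A}(\mathcal{D}_Q)$ — where the union is over all hearts $\mathcal{A}$ obtained from $\mathcal{A}_\mathrm{can}$ by iterated simple forward/backward tilts — is simultaneously connected, open and closed in $\mathrm{Stab}(\mathcal{D}_Q)$; since $U\supset\mathrm{Stab}_{\mathcal{A}_\mathrm{can}}(\mathcal{D}_Q)$ and $\mathrm{Stab}_{\mathcal{A}_\mathrm{can}}(\mathcal{D}_Q)\subset\mathrm{Stab}^\circ(\mathcal{D}_Q)$, this forces $U=\mathrm{Stab}^\circ(\mathcal{D}_Q)$. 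Connectedness is the easy part: granting the finite-length claim below, each cell $\mathrm{Stab}_\mathcal{A}(\mathcal{D}_Q)$ is isomorphic to $\mathbb{H}^n$ and so connected, and the cells of a finite-length heart $\mathcal{A}$ and of any of its simple tilts share a common boundary wall (the locus where the tilted simple has phase $0$ or $1$); since every heart appearing in $U$ is joined to $\mathcal{A}_\mathrm{can}$ by a finite chain of simple tilts, $U$ is connected.

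For openness and closedness I would appeal to \cite[Theorem 2.12]{Woo}: the local structure of $\mathrm{Stab}(\mathcal{D}_Q)$ near a finite-length heart $\mathcal{A}$ shows that a small deformation of a stability condition with heart $\mathcal{A}$ has heart obtained from $\mathcal{A}$ by simultaneously tilting at the simples whose phase has just crossed $0$ or $1$ — which one rewrites as a finite composite of simple tilts — so a neighbourhood of $\mathrm{Stab}_\mathcal{A}(\mathcal{D}_Q)$ remains inside $U$, and a symmetric argument shows the complement is open too. The sole hypothesis to verify, and the only ADE-specific input, is that \emph{every} heart occurring in $U$ is of finite length with finitely many simples. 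I would prove this by induction on the number of tilts, carrying along the hypothesis that the heart is of finite length with $n$ simple objects which are rigid (equivalently spherical, as $\mathcal{D}_Q$ is $2$-Calabi--Yau) and whose pairwise graded $\mathrm{Hom}$-groups are as in Proposition \ref{prop3.2}. The base case $\mathcal{A}=\mathcal{A}_\mathrm{can}$ is Proposition \ref{prop3.2} itself. For the inductive step, let $\mathcal{A}$ satisfy the hypothesis and $S$ be one of its simples; Proposition \ref{prop2.4} then gives that $\mathcal{A}^\sharp_S$ and $\mathcal{A}^\flat_S$ are of finite length with $n$ simples, and one checks that $\Phi^\sharp_S$ (resp.\ $\Phi^\flat_S$) agrees on the simples of $\mathcal{A}$ with the inverse of (resp.\ with) the spherical twist autoequivalence $T_S$ along $S$: trivially when $\mathrm{Hom}^*(M,S)=0$, and by comparing mapping cones when $\mathrm{Hom}^*(M,S)$ is concentrated in a single degree. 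Thus $\mathrm{Sim}(\mathcal{A}^\sharp_S)=T_S^{-1}(\mathrm{Sim}(\mathcal{A}))$ and $\mathrm{Sim}(\mathcal{A}^\flat_S)=T_S(\mathrm{Sim}(\mathcal{A}))$, so the new simples are spherical and have the same pairwise $\mathrm{Hom}$-groups as the old ones, and the induction continues. In particular the simples of any reachable heart are the image of $S_1,\dots,S_n$ under a composite of spherical twists.

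The main obstacle is therefore not the topology — which \cite[Theorem 2.12]{Woo} packages — but precisely this persistence of the finite-length condition under tilting, i.e.\ the identification above of the simple-tilting functors with spherical twists, through which rigidity and the $\mathrm{Hom}$-configuration propagate. I expect the only genuinely delicate point there to be careful bookkeeping of cohomological degrees and of the $2$-Calabi--Yau duality when matching the cone formulas of Proposition \ref{prop2.4} with those defining $T_S$; given the explicit computations of Proposition \ref{prop3.2}, this should be a routine check.
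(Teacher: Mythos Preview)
The paper does not actually prove this proposition: it cites it as \cite[Corollary 5.3]{Qiu} and only remarks that it ``can be proved using \cite[Theorem 2.12]{Woo}''.  Your sketch is a faithful reconstruction of exactly that argument --- showing the union of tilting cells is open and closed via Woolf's wall-crossing description, with the ADE-specific input being the persistence of the finite-length property under simple tilts.  Moreover, the identification $\mathrm{Sim}(\mathcal{A}^\sharp_S)=T_S^{-1}(\mathrm{Sim}(\mathcal{A}))$ and $\mathrm{Sim}(\mathcal{A}^\flat_S)=T_S(\mathrm{Sim}(\mathcal{A}))$ that you establish inductively is precisely what the paper proves (for $\mathcal{A}=\mathcal{A}_\mathrm{can}$, then iterates) in the proof of Corollary~\ref{cor3.4}; so you have in effect merged Proposition~\ref{prop3.3} and Corollary~\ref{cor3.4} into a single argument, which is perfectly fine and arguably cleaner.

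One small point to tighten: your closedness step (``a symmetric argument shows the complement is open too'') and the claim that a tilt at several simples whose phases cross simultaneously can be rewritten as a composite of \emph{simple} tilts both deserve a sentence of justification.  For the latter, note that in the ADE $2$-Calabi--Yau setting any two distinct simples $M,N$ with phase crossing the same wall satisfy $\mathrm{Hom}^0(M,N)=\mathrm{Hom}^0(N,M)=0$, so the subcategory they generate is semisimple and tilting at it factors as successive simple tilts; for the former, Woolf's theorem already gives that the closure of each cell $\mathrm{Stab}_\mathcal{A}(\mathcal{D}_Q)$ is contained in the union of neighbouring tilting cells, which is the statement you need.  With those two remarks made explicit, the argument is complete.
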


By Proposition \ref{prop3.2}, $S_1,\dots,S_n$ are {\em spherical objects} in the sense of \cite{ST}.
Thus each $S_i$ defines an exact autoequivalence $\Phi_i$ called the {\em spherical twist} which acts on $M \in \mathcal{D}_Q$ as
\begin{gather*}
\Phi_i(M) = \mathrm{Cone}(\mathrm{Hom}^*(S_i,M) \otimes S_i \to M),\\
\Phi_i^{-1}(M) = \mathrm{Cone}(M \to \mathrm{Hom}^*(M,S_i)^\vee \otimes S_i)[-1].
\end{gather*}
Let $\mathrm{Sph}(\mathcal{D}_Q)$ be the subgroup of $\mathrm{Auteq}(\mathcal{D}_Q)$ generated by $\Phi_1,\dots,\Phi_n$.

\begin{cor}\label{cor3.4}
For an ADE quiver $Q$,
\begin{equation*}
\mathrm{Stab}^\circ(\mathcal{D}_Q) = \bigcup_{\Phi \in \mathrm{Sph}(\mathcal{D}_Q)} \Phi \cdot \mathrm{Stab}_{\mathcal{A}_\mathrm{can}}(\mathcal{D}_Q).
\end{equation*}
\end{cor}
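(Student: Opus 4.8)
The plan is to upgrade Proposition~\ref{prop3.3} by identifying the hearts it involves with the $\mathrm{Sph}(\mathcal{D}_Q)$-orbit of $\mathcal{A}_\mathrm{can}$. The formal ingredient is that an exact autoequivalence acts by $\Phi\cdot(Z,\mathcal{P}) = (Z\circ\Phi^{-1},\Phi(\mathcal{P}))$, whose heart $\Phi(\mathcal{P})(0,1] = \Phi(\mathcal{P}(0,1])$ is $\Phi(\mathcal{A}_{(Z,\mathcal{P})})$; thus $\mathrm{Stab}_{\Phi(\mathcal{A})}(\mathcal{D}_Q) = \Phi\cdot\mathrm{Stab}_{\mathcal{A}}(\mathcal{D}_Q)$ for every heart $\mathcal{A}$. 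So it suffices to prove that a heart is obtained from $\mathcal{A}_\mathrm{can}$ by iterated simple forward/backward tilts if and only if it is of the form $\Phi(\mathcal{A}_\mathrm{can})$ with $\Phi\in\mathrm{Sph}(\mathcal{D}_Q)$; feeding this into Proposition~\ref{prop3.3} immediately yields $\mathrm{Stab}^\circ(\mathcal{D}_Q) = \bigcup_{\Phi\in\mathrm{Sph}(\mathcal{D}_Q)}\mathrm{Stab}_{\Phi(\mathcal{A}_\mathrm{can})}(\mathcal{D}_Q) = \bigcup_{\Phi\in\mathrm{Sph}(\mathcal{D}_Q)}\Phi\cdot\mathrm{Stab}_{\mathcal{A}_\mathrm{can}}(\mathcal{D}_Q)$.

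The key point is that a single simple tilt of $\mathcal{A}_\mathrm{can}$ at one of its simples is a single spherical twist: $(\mathcal{A}_\mathrm{can})^\sharp_{S_i} = \Phi_i^{-1}(\mathcal{A}_\mathrm{can})$ and $(\mathcal{A}_\mathrm{can})^\flat_{S_i} = \Phi_i(\mathcal{A}_\mathrm{can})$. By Proposition~\ref{prop3.2} each $S_i$ is a rigid simple object of $\mathcal{A}_\mathrm{can}$, so Proposition~\ref{prop2.4} gives $\mathrm{Sim}((\mathcal{A}_\mathrm{can})^\sharp_{S_i}) = \{S_i[1]\}\cup\{\Phi^\sharp_{S_i}(S_j) : j\neq i\}$ and similarly for the backward tilt. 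I would then compare these against the spherical twist formulas using the Hom-spaces of Proposition~\ref{prop3.2}. A short cone computation gives $\Phi_i^{-1}(S_i) = S_i[1]$ and $\Phi_i(S_i) = S_i[-1]$. For $j\sim i$ the space $\mathrm{Hom}^*(S_j,S_i)=\mathbb{K}[-1]$ is one-dimensional, which forces the objects $\mathrm{Hom}^*(S_j,S_i)^\vee\otimes S_i$ and $\mathrm{Hom}(S_j,S_i[1])^\vee\otimes S_i[1]$ (both isomorphic to $S_i[1]$), together with the canonical maps out of $S_j$, to coincide, so $\Phi_i^{-1}(S_j) = \Phi^\sharp_{S_i}(S_j)$ and symmetrically $\Phi_i(S_j) = \Phi^\flat_{S_i}(S_j)$; for $j\neq i$ with $j\not\sim i$ all the relevant Hom-spaces vanish and both operations fix $S_j$. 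Hence $\mathrm{Sim}((\mathcal{A}_\mathrm{can})^\sharp_{S_i}) = \Phi_i^{-1}(\mathrm{Sim}(\mathcal{A}_\mathrm{can}))$ and $\mathrm{Sim}((\mathcal{A}_\mathrm{can})^\flat_{S_i}) = \Phi_i(\mathrm{Sim}(\mathcal{A}_\mathrm{can}))$; since a finite length heart is the extension closure of its simple objects, two finite length hearts with the same simples coincide, which gives the claimed identities.

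The rest is a two-sided induction. Because an exact autoequivalence $\Psi$ sends a torsion pair on $\mathcal{A}$ to the corresponding torsion pair on $\Psi(\mathcal{A})$ and commutes with the tilting construction of \cite{HRS}, one has $(\Psi\mathcal{A})^\sharp_{\Psi(S)} = \Psi(\mathcal{A}^\sharp_S)$ and $(\Psi\mathcal{A})^\flat_{\Psi(S)} = \Psi(\mathcal{A}^\flat_S)$ for every simple $S\in\mathcal{A}$. Thus if $\mathcal{A} = \Phi(\mathcal{A}_\mathrm{can})$ with $\Phi\in\mathrm{Sph}(\mathcal{D}_Q)$, then its simple objects are $\Phi(S_i)$ and by the previous paragraph $\mathcal{A}^\sharp_{\Phi(S_i)} = \Phi\Phi_i^{-1}(\mathcal{A}_\mathrm{can})$, $\mathcal{A}^\flat_{\Phi(S_i)} = \Phi\Phi_i(\mathcal{A}_\mathrm{can})$, again in the $\mathrm{Sph}(\mathcal{D}_Q)$-orbit of $\mathcal{A}_\mathrm{can}$; induction on the number of tilts shows every heart occurring in Proposition~\ref{prop3.3} has this form. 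Conversely, each generator $\Phi_i^{\pm1}$ of $\mathrm{Sph}(\mathcal{D}_Q)$ sends $\mathcal{A}_\mathrm{can}$ to a simple tilt of it, so induction on word length shows every $\Phi(\mathcal{A}_\mathrm{can})$ is reached by iterated simple tilts. This matches up the two families of hearts and finishes the proof. The only delicate step is the term-by-term matching in the second paragraph: keeping the shifts in the cone formulas of Proposition~\ref{prop2.4} aligned with those in the definition of the spherical twists. Everything else is bookkeeping.
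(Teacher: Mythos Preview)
Your proof is correct and follows the same approach as the paper: identify a single simple tilt at $S_i$ with the action of $\Phi_i^{\pm 1}$ on $\mathcal{A}_\mathrm{can}$ by matching simple objects via Propositions~\ref{prop2.4} and~\ref{prop3.2}, then iterate and invoke Proposition~\ref{prop3.3}. You spell out a few points the paper leaves implicit---the case split $j\sim i$ versus $j\not\sim i$, the fact that finite length hearts with the same simples coincide, and the naturality $(\Psi\mathcal{A})^\sharp_{\Psi(S)}=\Psi(\mathcal{A}^\sharp_S)$ needed for the induction---but the overall argument is the same.
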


\begin{proof}
Let $\mathcal{A}^\sharp_i$ (resp. $\mathcal{A}^\flat_i$) be the simple forward tilt (resp. backward tilt) of $\mathcal{A}_\mathrm{can}$ with respect to $S_i$.
By Proposition \ref{prop3.2}, it follows that $\Phi^\sharp_{S_i}(M) = \Phi_i^{-1}(M)$ and $\Phi^\flat_{S_i}(M) = \Phi_i(M)$ for all $S_i \neq M \in \mathrm{Sim}(\mathcal{A}_\mathrm{can})$.
Since $\Phi_i(S_i) = S_i[-1]$, this implies that $\mathrm{Sim}(\mathcal{A}^\sharp_i) = \Phi_i^{-1}(\mathrm{Sim}(\mathcal{A}_\mathrm{can}))$ and $\mathrm{Sim}(\mathcal{A}^\flat_i) = \Phi_i(\mathrm{Sim}(\mathcal{A}_\mathrm{can}))$ by Proposition \ref{prop2.4}.
Therefore $\mathcal{A}^\sharp_i = \Phi_i^{-1}(\mathcal{A}_\mathrm{can})$, $\mathcal{A}^\flat_i = \Phi_i(\mathcal{A}_\mathrm{can})$ and
\begin{gather*}
\mathrm{Stab}_{\mathcal{A}^\sharp_i}(\mathcal{D}_Q)= \mathrm{Stab}_{\Phi_i^{-1}(\mathcal{A}_\mathrm{can})}(\mathcal{D}_Q) = \Phi_i^{-1} \cdot \mathrm{Stab}_{\mathcal{A}_\mathrm{can}}(\mathcal{D}_Q),\\
\mathrm{Stab}_{\mathcal{A}^\flat_i}(\mathcal{D}_Q)= \mathrm{Stab}_{\Phi_i(\mathcal{A}_\mathrm{can})}(\mathcal{D}_Q) = \Phi_i \cdot \mathrm{Stab}_{\mathcal{A}_\mathrm{can}}(\mathcal{D}_Q).
\end{gather*}
The assertion follows by iterating this process and applying Proposition \ref{prop3.3}.
\end{proof}

\begin{rmk}
If we consider the $d$-Calabi--Yau Ginzburg dg algebra $\Gamma_Q$ for general $d \geq 2$, we have $\mathcal{A}^{\sharp(d-1)}_i = \Phi_i^{-1}(\mathcal{A}_\mathrm{can})$ and $\mathcal{A}^{\flat(d-1)}_i = \Phi_i(\mathcal{A}_\mathrm{can})$ where $\mathcal{A}^{\sharp k}_i$ and $\mathcal{A}^{\flat k}_i$ are defined inductively by $\mathcal{A}^{\sharp k}_i = (\mathcal{A}^{\sharp (k-1)}_i)^\sharp_{S_i[k-1]}$ and $\mathcal{A}^{\flat k}_i = (\mathcal{A}^{\flat (k-1)}_i)^\flat_{S_i[-k+1]}$ (see \cite[Corollary 8.4]{KQ}).
Accordingly, the description of $\mathrm{Stab}^\circ(\mathcal{D}_Q)$ becomes more complicated when $d \geq 3$.
In this paper, we restrict our attention to the case $d=2$ to make computations manageable.
\end{rmk}

\subsection{Proof of Theorem \ref{thm1.1}}

By Corollaries \ref{cor2.9} and \ref{cor3.4}, it is enough to prove Theorem \ref{thm1.1} for the stability conditions in $\mathrm{Stab}_{\mathcal{A}_\mathrm{can}}(\mathcal{D}_Q) \subset \mathrm{Stab}^\circ(\mathcal{D}_Q)$.

Note that $K(\mathcal{D}_Q) \cong K(\mathcal{A}_\mathrm{can}) \cong \mathbb{Z}^n$ is generated by the classes of $S_1,\dots,S_n$.
Moreover, by Proposition \ref{prop3.2}, $(\chi(S_i,S_j))_{i,j}$ is the Cartan matrix of the underlying graph of $Q$.
In particular, it is non-degenerate so $\mathcal{N}(\mathcal{D}_Q) = K(\mathcal{D}_Q)$.

\begin{dfn}
Let $\Delta^+_Q$ be the subset of $K(\mathcal{D}_Q) \cong \mathbb{Z}^n$ consisting of those elements corresponding to the positive roots of the underlying graph of $Q$ so that $S_1,\dots,S_n$ correspond to the simple roots.
\end{dfn}

\begin{exa}
Let $Q$ be an $A_3$ quiver.
Then,
\begin{equation*}
\Delta^+_Q = \{S_1,S_2,S_3,S_1+S_2,S_2+S_3,S_1+S_2+S_3\} \subset K(\mathcal{D}_Q).
\end{equation*}
\end{exa}

\begin{prop}\label{prop3.8}
Let $Q$ be an ADE quiver with $n$ vertices.
Then, for any $\sigma = (Z,\mathcal{P}) \in \mathrm{Stab}_{\mathcal{A}_\mathrm{can}}(\mathcal{D}_Q)$,
\begin{equation}\label{eq3.1}
\mathrm{vol}(\sigma) = \sum_{i,j=1}^n \chi^{ij} Z(S_i) \overline{Z(S_j)} = \frac{1}{h_Q} \sum_{M \in \Delta^+_Q} \lvert Z(M) \rvert^2
\end{equation}
where $h_Q$ is the Coxeter number of the underlying graph of $Q$.
\end{prop}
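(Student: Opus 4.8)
The plan is to reduce the identity \eqref{eq3.1} to a purely combinatorial statement about the ADE root system and then to establish that statement by a Weyl-invariance argument. First I would recall from the discussion preceding the proposition that $C := (\chi(S_i,S_j))_{i,j}$ is the Cartan matrix of the underlying graph, so that with the basis $S_1,\dots,S_n$ of $\mathcal{N}(\mathcal{D}_Q) = K(\mathcal{D}_Q)$ one has $\chi^{ij} = (C^{-1})_{ij}$; since $C$, hence $C^{-1}$, is positive definite, the middle term of \eqref{eq3.1} is automatically a non-negative real, so the absolute value in Definition \ref{dfn2.7} may be suppressed. By definition $\Delta^+_Q \subset K(\mathcal{D}_Q)$ is the set of positive roots with the $S_i$ as simple roots, so I would write each $\beta \in \Delta^+_Q$ as $\beta = \sum_i b_i(\beta) S_i$ with $b_i(\beta) \in \mathbb{Z}_{\geq 0}$; additivity of $Z$ gives $Z(\beta) = \sum_i b_i(\beta) Z(S_i)$, whence
\begin{equation*}
\frac{1}{h_Q}\sum_{\beta \in \Delta^+_Q}\lvert Z(\beta)\rvert^2 = \sum_{i,j=1}^n\left(\frac{1}{h_Q}\sum_{\beta \in \Delta^+_Q} b_i(\beta)\,b_j(\beta)\right) Z(S_i)\,\overline{Z(S_j)}.
\end{equation*}
So the proposition comes down to the coefficient identity $\sum_{\beta \in \Delta^+_Q} b_i(\beta)\,b_j(\beta) = h_Q\,(C^{-1})_{ij}$ for all $i,j$.

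To prove this I would work in the Euclidean space $V = \bigoplus_{i=1}^n \mathbb{R}\,S_i$ carrying the inner product $(\cdot,\cdot)$ that is invariant under the Weyl group $W_Q$ and normalized so that $(S_i,S_i) = 2$, so that $C_{ij} = (S_i,S_j)$ and every root has squared length $2$. Introducing the fundamental weights $\varpi_i \in V$ determined by $(\varpi_i,S_j) = \delta_{ij}$, one checks $\varpi_i = \sum_k (C^{-1})_{ik}S_k$, hence $(\varpi_i,\varpi_j) = (C^{-1})_{ij}$, and $b_i(\beta) = (\varpi_i,\beta)$ for every root $\beta$; thus the coefficient identity is equivalent to $\sum_{\beta \in \Delta^+_Q}(\varpi_i,\beta)(\varpi_j,\beta) = h_Q\,(\varpi_i,\varpi_j)$. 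Next I would consider the symmetric bilinear form $B(u,v) := \sum_{\beta \in \Delta^+_Q}(u,\beta)(v,\beta)$ on $V$: since $\Delta_Q := \Delta^+_Q \sqcup (-\Delta^+_Q)$ is $W_Q$-stable and each term is unchanged under $\beta \mapsto -\beta$, $B$ is $W_Q$-invariant, and because the ADE root system is irreducible, $W_Q$ acts irreducibly on $V$, so Schur's lemma forces $B = c\,(\cdot,\cdot)$ for a scalar $c$. Taking traces against an orthonormal basis $e_1,\dots,e_n$ of $V$ and using $(\beta,\beta) = 2$,
\begin{equation*}
c\,n = \sum_{k=1}^n B(e_k,e_k) = \sum_{\beta \in \Delta^+_Q}\sum_{k=1}^n (e_k,\beta)^2 = \sum_{\beta \in \Delta^+_Q}(\beta,\beta) = 2\,\lvert\Delta^+_Q\rvert = \lvert\Delta_Q\rvert ;
\end{equation*}
since the number of roots of an irreducible root system of rank $n$ equals $n\,h_Q$, this yields $c = h_Q$, and evaluating $B$ on the $\varpi_i$ gives the required identity, hence \eqref{eq3.1}.

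The substance of the argument is concentrated in pinning down the constant $c = h_Q$; everything else is bookkeeping translating the categorical data $\chi^{ij}$, $Z(S_i)$ and $\Delta^+_Q$ into root-theoretic language. The external inputs — irreducibility of the $W_Q$-action on $V$ and the classical count $\lvert\Delta_Q\rvert = n\,h_Q$ — are standard for finite root systems, and since ADE types are simply-laced there is no discrepancy between the Coxeter number and the dual Coxeter number to keep track of. Should one prefer an entirely elementary route, the coefficient identity $\sum_{\beta \in \Delta^+_Q} b_i(\beta)\,b_j(\beta) = h_Q\,(C^{-1})_{ij}$ can instead be verified directly for each of $A_n$, $D_n$, $E_6$, $E_7$, $E_8$, but the uniform Schur-lemma argument is cleaner.
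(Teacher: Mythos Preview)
Your argument is correct and takes a genuinely different route from the paper. Both proofs begin with the same reduction to the coefficient identity $\sum_{\beta \in \Delta^+_Q} b_i(\beta)b_j(\beta) = h_Q\,(C^{-1})_{ij}$, but from there they diverge: the paper verifies this case by case, writing down closed formulas for $(C^{-1})_{ij}$ and explicit lists of positive roots for $A_n$ and $D_n$ (the $D_n$ case alone splits into four sub-cases), and then appeals to direct computation for $E_6,E_7,E_8$. You instead recognise the left-hand side as $B(\varpi_i,\varpi_j)$ for a $W_Q$-invariant symmetric form $B$, invoke irreducibility of the reflection representation and Schur's lemma to get $B = c\,(\cdot,\cdot)$, and pin down $c = h_Q$ via the trace and the standard count $\lvert\Delta_Q\rvert = n\,h_Q$. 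Your approach is shorter, uniform across all ADE types, and makes transparent \emph{why} the Coxeter number appears (it is the ratio $\lvert\Delta_Q\rvert/n$); the paper's approach has the compensating virtue of being entirely elementary and self-contained, requiring no structure theory of root systems beyond the bare lists of roots and Cartan matrices. Your closing remark that one could fall back on a type-by-type check is in fact exactly what the paper does.
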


\begin{proof}
It suffices to prove the second equality of \eqref{eq3.1}.
Indeed, it implies the middle of \eqref{eq3.1} is non-negative so the first equality follows from the definition of the categorical volume.

We will show the second equality of \eqref{eq3.1} by comparing the coefficients of $Z(S_i) \overline{Z(S_j)}$ in both sides for every $i \leq j$.
More specifically, we shall show that
\begin{equation}\label{eq3.2}
\chi^{ij} = \frac{1}{h_Q} \sum_{M \in \Delta^+_Q} c_i(M)c_j(M)
\end{equation}
for all $1 \leq i \leq j \leq n$, where $c_i(M) \in \mathbb{Z}_{\geq 0}$ denotes the coefficient of $S_i$ in $M$.

(1)
Let $Q$ be an $A_n$ quiver.
Label the vertices of $Q$ as follows (the orientations of the arrows are suppressed from the picture):
\begin{equation*}
\xymatrix@R=5mm@C=8mm{
\overset{1}{\bullet} \ar@{-}[r] & \overset{2}{\bullet} \ar@{-}[r] & \overset{3}{\bullet} \ar@{.}[r] & \overset{n-1}{\bullet} \ar@{-}[r] & \overset{n}{\bullet}
}
\end{equation*}
Then $\chi^{ij}$, which is the $(i,j)$-component of the inverse matrix of the Cartan matrix $(\chi(S_i,S_j))_{i,j}$, is given by
\begin{equation*}
\chi^{ij} = \min\{i,j\} - \frac{ij}{n+1}.
\end{equation*}
Moreover $\Delta^+_Q$ consists of $\frac{n(n+1)}{2}$ elements $R_{ij} = S_i + \cdots + S_j$ $(1 \leq i \leq j \leq n)$.

Now fix $i \leq j$.
Then $c_i(R_{kl})c_j(R_{kl}) \neq 0$ if and only if $c_i(R_{kl})c_j(R_{kl}) = 1$ if and only if $k \leq i \leq j \leq l$.
This implies that
\begin{equation*}
\frac{1}{h_Q} \sum_{M \in \Delta^+_Q} c_i(M)c_j(M) = \frac{1}{n+1} \cdot i(n+1-j) = i - \frac{ij}{n+1} = \chi^{ij}.
\end{equation*}

(2)
Let $Q$ be a $D_n$ quiver.
Label the vertices of $Q$ as follows:
\begin{equation*}
\xymatrix@R=5mm@C=8mm{
\overset{1}{\bullet} \ar@{-}[r] & \overset{2}{\bullet} \ar@{-}[r] & \overset{3}{\bullet} \ar@{.}[r] & \overset{n-2}{\bullet} \ar@{-}[r] \ar@{-}[d] & \overset{n-1}{\bullet}\\
&&& \underset{n}{\bullet} &
}
\end{equation*}
Then $\chi^{ij}$ for $i \leq j$ is given by
\begin{equation*}
\chi^{ij} =
\begin{cases}
i & (1 \leq i \leq j \leq n-2)\\
\frac{i}{2} & (1\leq i \leq n-2,j=n-1,n)\\
\frac{n-2}{4} & ((i,j)=(n-1,n))\\
\frac{n}{4} & ((i,j)=(n-1,n-1),(n,n))
\end{cases}
\end{equation*}
and those for $i > j$ can be obtained from this using $\chi^{ij} = \chi^{ji}$.
Moreover $\Delta^+_Q$ consists of $n(n-1)$ elements.
Concretely, $\Delta^+_Q$ consists of $\frac{(n-2)(n-1)}{2}$ elements of the form
\begin{equation*}
R_{ij} = S_i + \cdots + S_j \quad (1 \leq i \leq j \leq n-2),
\end{equation*}
$n-1$ elements of the form
\begin{equation*}
R^+_i = S_i + \cdots +S_{n-2} + S_{n-1} \quad (1 \leq i \leq n-1),
\end{equation*}
$n-1$ elements of the form
\begin{equation*}
R^-_i = S_i + \cdots +S_{n-2} + S_n \quad (1 \leq i \leq n-1)
\end{equation*}
(we set $R^-_{n-1} = S_n$), and $\frac{(n-2)(n-1)}{2}$ elements of the form
\begin{equation*}
R^a_i = S_i + \cdots + S_{n-a-2} + 2S_{n-a-1} + \cdots + 2S_{n-2} + S_{n-1} + S_n \quad (1 \leq i \leq n-a-2)
\end{equation*}
where $0 \leq a \leq n-3$.

\begin{itemize}

\item
Let $(i,j)=(n-1,n-1)$ (the case $(i,j) = (n,n)$ can be treated in the same way).
The elements of $\Delta^+_Q$ that contribute to the right hand side of \eqref{eq3.2} are $R^+_k,R^a_k$ and each of them contributes 1.
Therefore
\begin{equation*}
\frac{1}{h_Q} \sum_{M \in \Delta^+_Q} c_i(M)c_j(M) = \frac{1}{2(n-1)} \left[ (n-1) + \frac{(n-2)(n-1)}{2} \right] = \frac{n}{4} = \chi^{ij}.
\end{equation*}

\item
Next, let $(i,j) = (n-1,n)$.
In this case, the elements of $\Delta^+_Q$ that contribute to the right hand side of \eqref{eq3.2} are $R^a_k$ and each of them contributes 1.
Thus
\begin{equation*}
\frac{1}{h_Q} \sum_{M \in \Delta^+_Q} c_i(M)c_j(M) = \frac{1}{2(n-1)} \cdot \frac{(n-2)(n-1)}{2} = \frac{n-2}{4} = \chi^{ij}.
\end{equation*}

\item
Now fix $1\leq i \leq n-2,j=n-1$ (again, the case $j=n$ can be treated similarly).
The elements of $\Delta^+_Q$ that contribute to the right hand side of \eqref{eq3.2} are $R^+_k,R^a_k$ $(k \leq i)$.
There are $i$ such $R^+_k$ and each of them contributes 1.
On the other hand, if $0 \leq a \leq n-i-2$, each $R^a_k$ contributes 1 and the number of such $R^a_k$ is $(n-i-1)i$.
Moreover, if $n-i-1 \leq a \leq n-3$, each $R^a_k$ contributes 2 and the number of such $R^a_k$ is $\frac{(i-1)i}{2}$.
This shows that
\begin{equation*}
\frac{1}{h_Q} \sum_{M \in \Delta^+_Q} c_i(M)c_j(M) = \frac{1}{2(n-1)} \left[ i + (n-i-1)i + 2 \cdot \frac{(i-1)i}{2} \right] = \frac{i}{2} = \chi^{ij}.
\end{equation*}

\item
Finally, fix $1 \leq i \leq j \leq n-2$.
The elements of $\Delta^+_Q$ that contribute to the right hand side of \eqref{eq3.2} are $R_{kl},R^+_k,R^-_k,R^a_k$ $(k \leq i \leq j \leq l)$.
Among them, each $R_{kl}$ (resp. $R^\pm_k$) contributes 1 and there are $(n-j-1)i$ (resp. $i$) such elements.
On the other hand, each $R^a_k$ contributes 1 if $0 \leq a \leq n-j-2$ and there are $(n-j-1)i$ such $R^a_k$.
Moreover, if $n-j-1 \leq a \leq n-i-2$, each $R^a_k$ contributes 2 and the number of such $R^a_k$ is $(j-i)i$.
For the remaining case $n-i-1 \leq a \leq n-3$, each $R^a_k$ contributes 4 and there are $\frac{(i-1)i}{2}$ such $R^a_k$.
Consequently, we get
\begin{align*}
\frac{1}{h_Q}
&\sum_{M \in \Delta^+_Q} c_i(M)c_j(M)\\
&= \frac{1}{2(n-1)} \left[ (n-j-1)i + i + i + (n-j-1)i + 2 \cdot (j-i)i + 4 \cdot \frac{(i-1)i}{2} \right]\\
&= i = \chi^{ij}.
\end{align*}

\end{itemize}

(3)
The exceptional cases $E_6,E_7,E_8$ can be verified by direct computations.
\end{proof}

\begin{proof}[Proof of Theorem \ref{thm1.1}]
Let $\sigma = (Z,\mathcal{P}) \in \mathrm{Stab}_{\mathcal{A}_\mathrm{can}}(\mathcal{D}_Q)$.
Since $S_1,\dots,S_n$ are simple in $\mathcal{A}_\mathrm{can}$, they are stable for any $\sigma \in \mathrm{Stab}_{\mathcal{A}_\mathrm{can}}(\mathcal{D}_Q)$.
Therefore
\begin{equation*}
\mathrm{sys}(\sigma) = \inf\{\lvert Z(M) \rvert \,|\, M \text{ is a $\sigma$-stable object of } \mathcal{D}_Q\} \leq \inf_{1 \leq i \leq n} \lvert Z(S_i) \rvert.
\end{equation*}
Then, by Proposition \ref{prop3.8},
\begin{align*}
\mathrm{vol}(\sigma)
&= \frac{1}{h_Q} \sum_{M \in \Delta^+_Q} \lvert Z(M) \rvert^2 \geq \frac{1}{h_Q} \sum_{i=1}^n \lvert Z(S_i) \rvert^2\\
&\geq \frac{n}{h_Q} \inf_{1 \leq i \leq n} \lvert Z(S_i) \rvert^2 \geq \frac{n}{h_Q} \mathrm{sys}(\sigma)^2
\end{align*}
as desired.
\end{proof}

\section{Geometric viewpoint}\label{sec4}

\subsection{Milnor fibers}

Let $\mathcal{P}_n$ be the space of polynomials $z^{n+1} + a_1z^{n-1} + a_2z^{n-2} + \cdots + a_n \in \mathbb{C}[z]$ with only simple zeros.
This can be identified with the configuration space $\mathrm{Conf}_{n+1}^0(\mathbb{C})$ of $n+1$ points in $\mathbb{C}$ with the center of mass $0$ by sending $(z-\zeta_1) \cdots (z-\zeta_{n+1}) \in \mathcal{P}_n$ to $[\zeta_1,\dots,\zeta_{n+1}] \in \mathrm{Conf}_{n+1}^0(\mathbb{C})$.

For $p \in \mathcal{P}_n$, we consider the associated {\em Milnor fiber} of type $A_n$
\begin{equation*}
X_p = \{(x,y,z) \in \mathbb{C}^3 \,|\, x^2+y^2=p(z)\}
\end{equation*}
equipped with the symplectic form $\omega_p$ restricted from the standard symplectic form $\omega_\mathrm{std} = \frac{i}{2}(dx \wedge d\overline{x} + dy \wedge d\overline{y} + dz \wedge d\overline{z})$ on $\mathbb{C}^3$.
It is known that the symplectomorphism type of $(X_p,\omega_p)$ does not depend on the choice of $p \in \mathcal{P}_n$.
Indeed, all of them are symplectomorphic to the $A_n$-plumbing of the cotangent bundles of $S^2$.

Now consider the projection $\pi : X_p \to \mathbb{C};\, (x,y,z) \mapsto z$.
Let $\Delta_p \subset \mathbb{C}$ be the set of zeroes of $p$ and
\begin{equation*}
\Sigma_{p,\zeta} = \{(\sqrt{p(\zeta)}\cos\theta,\sqrt{p(\zeta)}\sin\theta,\zeta) \in \mathbb{C}^3 \,|\, \theta \in S^1 = \mathbb{R}/2\pi\mathbb{Z} \} \subset \pi^{-1}(\zeta)
\end{equation*}
(where $\sqrt{p}$ is a suitably chosen smooth square root of $p$).
Note that, for $\zeta \in \Delta_p$ (resp. $\zeta \in \mathbb{C} \setminus \Delta_p$), $\Sigma_{p,\zeta}$ is a point (resp. a circle).
A simple curve $\gamma : [0,1] \to \mathbb{C}$ such that $\gamma^{-1}(\Delta_p) = \{0,1\}$ and $\gamma(0) \neq \gamma(1)$ will be called a {\em matching path}.
For a matching path $\gamma$, we define the {\em matching cycle} associated to $\gamma$ by
\begin{equation*}
L_\gamma = \bigcup_{t \in [0,1]} \Sigma_{p,\gamma(t)}.
\end{equation*}
This is a Lagrangian sphere of $(X_p,\omega_p)$ and isotopic matching paths give Hamiltonian isotopic matching cycles \cite[Lemma 6.12]{KS}.

\begin{thm}[{\cite[Theorem 1]{Wu}}]\label{thm4.1}
Every exact Lagrangian submanifold in $(X_p,\omega_p)$ is Hamiltonian isotopic to a matching cycle.
\end{thm}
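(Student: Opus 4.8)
The statement is due to \cite{Wu}; the plan is to reconstruct that argument, which has two stages: a soft topological reduction to the case of exact Lagrangian \emph{spheres}, and then a classification of those by induction on $n$, using Lagrangian surgery, the Lefschetz fibration on $X_p$, and Hind's uniqueness theorem for $T^*S^2$ as the base case.

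\emph{Step 1.} Since $X_p$ is an exact symplectic $4$-manifold with $c_1 = 0$, the normal bundle of a Lagrangian surface $L \subset X_p$ is isomorphic to $T^*L$, which yields the adjunction identity $[L]\cdot[L] = -\chi(L)$ in $H_2(X_p;\mathbb{Z})$. The lattice $H_2(X_p;\mathbb{Z}) \cong \mathbb{Z}^n$ carries the negative-definite $A_n$ form --- the negative of the Cartan matrix of Proposition \ref{prop3.2} --- so $[L]\cdot[L] \le 0$, with equality only when $[L] = 0$. An orientable $L$ of genus $\ge 1$ would therefore be a nullhomologous torus; but a nullhomologous exact Lagrangian torus does not occur in $X_p$ (through $D^\pi Fuk(X_p) \simeq \mathcal{D}_{A_n}$ \cite{Tho} it would produce an object with self-$\mathrm{Hom}$ ring $H^*(T^2;\mathbb{K})$ and trivial class in $K_0$, which is incompatible with the $2$-Calabi--Yau structure of $\mathcal{D}_{A_n}$), and non-orientable exact Lagrangian surfaces are excluded by the standard Euler-number constraints on Lagrangian surfaces in symplectic $4$-manifolds. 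Hence $L \cong S^2$, and $[L]\cdot[L] = -2$ forces $[L]$ to equal, up to sign, the class of a matching cycle.

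\emph{Step 2.} The symplectic mapping class group of $X_p$ contains the Dehn twists $\tau_{V_1},\dots,\tau_{V_n}$ along the standard $A_n$-chain of matching cycles, and these carry matching cycles to matching cycles; since the Weyl group $S_{n+1}$ acts transitively on roots up to sign, I may replace $L$ by its image under a suitable word in the $\tau_{V_i}$ and assume $[L]$ is the class of a fixed matching cycle. I then induct on $n$. For $n = 1$, $X_p = T^*S^2$ and the claim is Hind's theorem that an exact Lagrangian sphere there is Hamiltonian isotopic to the zero section. For the inductive step, put $L$ in good position relative to the Lefschetz fibration $\pi : X_p \to \mathbb{C}$ --- whose regular fibre is a cylinder and all of whose $n+1$ vanishing cycles are the core circle --- and, whenever $L$ meets a reference matching cycle $V'$, replace $L$ by a Polterovich surgery $L \# V'$: exactness lets one match up areas so the surgery is legal, the result is again an exact Lagrangian sphere, and since the surgery of two Lagrangian spheres meeting once is Hamiltonian isotopic to $\tau_{V'}$ of one of them, recognising the surgered sphere as a matching cycle recognises $L$ as one. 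Iterating drives $L$ into a configuration handled by the base case. Equivalently, one can run this endgame categorically: $L$ determines a spherical object of $\mathcal{D}_{A_n}$, every spherical object is a shift of some $\Phi(S_i)$ with $\Phi$ in the spherical twist group, and $\Phi$ is realised geometrically by Dehn twists, so the real content is the rigidity assertion that a Lagrangian sphere quasi-isomorphic in $D^\pi Fuk(X_p)$ to a matching cycle is Hamiltonian isotopic to it.

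\emph{Main obstacle.} The analytically serious content sits entirely in Step 2: legalising the surgeries (so that they preserve exactness and the sphere condition), and above all the rigidity input that an exact Lagrangian sphere Floer-theoretically indistinguishable from a matching cycle is genuinely Hamiltonian isotopic to it. This cannot be read off the Fukaya category, and is where holomorphic curves enter: one chooses almost complex structures adapted to $L$ and to $\pi$ --- possibly after compactifying $X_p$ to a rational surface, in which Lagrangian spheres and the symplectomorphism group are well understood --- proves the requisite compactness and transversality for moduli of holomorphic spheres or sections, rules out bubbling using exactness of $L$ and positivity of intersection, and extracts a Hamiltonian isotopy from the resulting singular holomorphic foliation. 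Even the base case, Hind's theorem, is an argument of precisely this shape using holomorphic spheres in $S^2 \times S^2$; carrying it through for all $A_n$ is the technical heart of \cite{Wu}.
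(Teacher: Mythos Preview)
The paper does not prove this statement at all: Theorem \ref{thm4.1} is quoted verbatim from \cite{Wu} and used as a black box in the subsequent lemmas. There is nothing in the paper to compare your proposal against; the author simply imports Wu's result.

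As a reconstruction of \cite{Wu} itself, your outline has the right overall shape---topological reduction to spheres, normalisation of the homology class via Dehn twists, and a holomorphic-curve rigidity argument with Hind's theorem as the seed---but several details diverge from Wu's actual proof. Your Step~1 exclusion of exact Lagrangian tori via $D^\pi Fuk(X_p)\simeq\mathcal{D}_{A_n}$ is not Wu's argument (he uses displaceability and Gromov--Floer type obstructions directly), and it quietly presupposes split-generation of the wrapped/compact Fukaya category by the vanishing cycles, which you should cite rather than leave implicit. The non-orientable case is dismissed too quickly: ``standard Euler-number constraints'' do not by themselves rule out, say, Lagrangian Klein bottles, and Wu handles this separately. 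In Step~2, Wu's endgame is not really an induction via iterated Polterovich surgeries; he compactifies $X_p$ to a rational surface and invokes the classification of Lagrangian spheres there (Evans, Li--Wu), which is where the holomorphic-curve input lives. Your ``main obstacle'' paragraph correctly identifies this as the analytic core, but the surgery-induction you sketch before it is more heuristic than what \cite{Wu} actually does.

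For the purposes of the present paper none of this matters: you may, as the author does, simply cite \cite[Theorem~1]{Wu} and move on.
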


Note that a matching cycle is invariant under the $U(1)$-action on $X_p$ given by
\begin{equation*}
e^{i\theta} \cdot (x,y,z) = (x\cos\theta-y\sin\theta,x\sin\theta+y\cos\theta,z).
\end{equation*}
It turns out that the converse also holds.

\begin{lem}\label{lem4.2}
An exact Lagrangian submanifold in $(X_p,\omega_p)$ is $U(1)$-invariant if and only if it is a matching cycle.
\end{lem}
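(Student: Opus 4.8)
The plan is to prove the non-trivial direction: if $L \subset (X_p,\omega_p)$ is an exact Lagrangian submanifold which is invariant under the $U(1)$-action $e^{i\theta}\cdot(x,y,z) = (x\cos\theta - y\sin\theta, x\sin\theta + y\cos\theta, z)$, then $L$ is a matching cycle. The forward implication is already noted in the text, so only this converse needs argument. First I would observe that the $U(1)$-action preserves each fiber $\pi^{-1}(\zeta)$ of the projection $\pi : X_p \to \mathbb{C}$, acting on $\pi^{-1}(\zeta) = \{x^2+y^2 = p(\zeta)\}$ by rotation, with orbits exactly the circles $\Sigma_{p,\zeta}$ (for $\zeta \notin \Delta_p$) or the single fixed point $\Sigma_{p,\zeta}$ (for $\zeta \in \Delta_p$). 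Hence a $U(1)$-invariant subset $L$ is automatically a union of whole orbits: $L = \bigcup_{\zeta \in \pi(L)} \Sigma_{p,\zeta}$ if $\pi(L)$ avoids $\Delta_p$ away from a finite set, or more precisely $L \cap \pi^{-1}(\zeta)$ is either empty, a point (only possible over $\Delta_p$), or all of $\Sigma_{p,\zeta}$.

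The next step is to analyze the image $\gamma := \pi(L) \subset \mathbb{C}$. Since $L$ is a Lagrangian submanifold (half-dimensional, so real dimension $2$) and over a generic point of $\pi(L)$ the fiber $L \cap \pi^{-1}(\zeta)$ is a circle, $\pi|_L$ generically has $1$-dimensional image; thus $\gamma$ is a $1$-dimensional subset, i.e. (the image of) a curve. Compactness of $L$ forces $\gamma$ to be compact. I would then argue that $\gamma$ cannot be a closed loop avoiding $\Delta_p$: over such a loop $L$ would be an $S^1$-bundle over $S^1$ inside the trivial part of the fibration, and exactness (or the fact that $H^2$ and the Liouville class obstruct an exact torus, together with the Lagrangian condition relative to the fibered symplectic form) rules this out — concretely, integrating the Liouville form $\lambda_p$ over the $S^1$-orbit class gives a nonzero period coming from the area enclosed in the fiber, contradicting $\lambda_p|_L$ exact unless the fiber circles bound, which happens only when $\gamma$ hits $\Delta_p$. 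Therefore $\gamma$ must terminate at points of $\Delta_p$: its endpoints (where the fiber circle collapses to the $U(1)$-fixed point) lie in $\Delta_p$, and at an endpoint $L$ is smooth only if $\gamma$ arrives transversally at a single point of $\Delta_p$. This shows $\gamma$ is a simple curve with $\gamma^{-1}(\Delta_p) = \{0,1\}$ and distinct endpoints — a matching path — and $L = L_\gamma$.

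The main obstacle, and the step I would spend the most care on, is ruling out the "wrong" topological types: namely showing that $\gamma$ has no self-intersections, that it does not pass through interior points of $\Delta_p$, that $L$ does not consist of several components over disjoint arcs, and that it is not a $U(1)$-invariant torus. The cleanest route is to combine the $U(1)$-invariance with exactness and connectedness: connectedness of $L$ (if $L$ is assumed connected, or else argue component by component) forces $\gamma$ connected; exactness via the period computation of $\lambda_p$ along a fiber $\Sigma_{p,\zeta}$ forces $\gamma$ to reach $\Delta_p$ at both ends (so it is an arc, not a loop and not a half-open curve); smoothness of $L$ at the collapsing fibers forces simplicity near the endpoints and transversality to $\Delta_p$; and smoothness in the interior, together with $\pi|_L$ being a submersion there, forces $\gamma$ to be an embedded arc avoiding $\Delta_p$ in its interior. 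Finally, once $\gamma$ is identified as a matching path, $L_\gamma$ is the associated matching cycle by definition, completing the proof; one may also invoke Theorem \ref{thm4.1} as a sanity check but it is not logically needed here.
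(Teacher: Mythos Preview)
Your proposal contains a fundamental error that makes the argument circular. You assert that for $\zeta \notin \Delta_p$ the $U(1)$-orbits in $\pi^{-1}(\zeta)$ are ``exactly the circles $\Sigma_{p,\zeta}$'', and hence that $L\cap\pi^{-1}(\zeta)$ is either empty, a point, or $\Sigma_{p,\zeta}$. This is false: the fiber $\pi^{-1}(\zeta)=\{x^2+y^2=p(\zeta)\}$ is a copy of $\mathbb{C}^*$ (set $u=x+iy$, so $u(x-iy)=p(\zeta)$), the $U(1)$-action becomes $u\mapsto e^{i\theta}u$, and the orbits form a one-parameter family indexed by $|u|=r>0$. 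The distinguished circle $\Sigma_{p,\zeta}$ is only the single orbit $|u|=|p(\zeta)|^{1/2}$. Likewise, over $\zeta\in\Delta_p$ the fiber is a pair of complex lines meeting at the origin and carries many circular orbits, not just the fixed point. Thus a $U(1)$-invariant $L$ a priori picks out an arbitrary orbit over each point of $\pi(L)$, described by a function $r(t)$ along the projected curve; the entire content of the lemma is that the Lagrangian condition forces $r(t)\equiv 1$, which is exactly what you have assumed from the outset.

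The paper's proof supplies precisely this missing step: it parametrizes $L$ over $\gamma$ with the free radial parameter $r(t)$, computes $\iota^*\omega_p$ explicitly, and shows that $\iota^*\omega_p=0$ together with the boundary condition at a critical value forces $r(t)=1$. Only then can one conclude $L\cap\pi^{-1}(\gamma(t))=\Sigma_{p,\gamma(t)}$ and hence $L=L_\gamma$. Your exactness/period discussion for ruling out tori, and your smoothness analysis at endpoints, are reasonable secondary considerations, but they cannot substitute for this computation. Note also that the paper does invoke Theorem~\ref{thm4.1} (to know $L$ is a sphere, hence that $\pi(L)$ is an arc with two distinct endpoints in $\Delta_p$), contrary to your final remark that it is not logically needed.
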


\begin{proof}
Every $U(1)$-orbit is contained in $\pi^{-1}(\zeta)$ for some $\zeta \in \mathbb{C}$.
For $\zeta \in \Delta_p$, a $U(1)$-orbit inside $\pi^{-1}(\zeta)$ is of the form $\{(re^{i\theta},ire^{i\theta},\zeta) \in \mathbb{C}^3 \,|\, \theta \in S^1 \}$ or $\{(re^{i\theta},-ire^{i\theta},\zeta) \in \mathbb{C}^3 \,|\, \theta \in S^1 \}$ for some $r \in \mathbb{R}_{\geq 0}$.
On the other hand, for $\zeta \in \mathbb{C} \setminus \Delta_p$, a $U(1)$-orbit inside $\pi^{-1}(\zeta)$ can be written as
\begin{equation*}
\left\{\left.\left(\frac{\sqrt{p(\zeta)}}{2}\left(re^{i\theta}+\frac{1}{r}e^{-i\theta}\right),\frac{\sqrt{p(\zeta)}}{2i}\left(re^{i\theta}-\frac{1}{r}e^{-i\theta}\right),\zeta\right) \in \mathbb{C}^3 \,\right|\, \theta \in S^1 \right\}
\end{equation*}
for some $r \in \mathbb{R}_{>0}$.
This can be seen from the $U(1)$-equivariant diffeomorphism
\begin{equation*}
\mathbb{C}^* \to \pi^{-1}(\zeta);\; u \mapsto \left(\frac{\sqrt{p(\zeta)}}{2}\left(u+\frac{1}{u}\right),\frac{\sqrt{p(\zeta)}}{2i}\left(u-\frac{1}{u}\right),\zeta\right)
\end{equation*}
where $U(1)$ acts on $\mathbb{C}^*$ by $e^{i\theta} \cdot u = e^{i\theta}u$.

Let $L$ be a $U(1)$-invariant exact Lagrangian submanifold in $(X_p,\omega_p)$ which is necessarily a $2$-sphere by Theorem \ref{thm4.1}.
Moreover, by the above description of the $U(1)$-orbits, the projected image $\pi(L)$ must be the image of a curve $\gamma : [0,1] \to \mathbb{C}$ such that $\gamma(0),\gamma(1) \in \Delta_p$, $\gamma(0) \neq \gamma(1)$ and $(0,0,\gamma(0)),(0,0,\gamma(1)) \in L$.
Take $0 < \varepsilon < 1$ so that $\gamma(t) \in \mathbb{C} \setminus \Delta_p$ for all $0 < t < \varepsilon$.
Near $(0,0,\gamma(0)) \in L$, let us parametrize $L$ by $\iota : (0,\varepsilon) \times S^1 \to X_p$ as follows:
\begin{align*}
\iota(t,\theta)
&= (x(t,\theta),y(t,\theta),\gamma(t))\\
&= \left(\frac{\sqrt{p(\gamma(t))}}{2}\left(r(t)e^{i\theta}+\frac{1}{r(t)}e^{-i\theta}\right),\frac{\sqrt{p(\gamma(t))}}{2i}\left(r(t)e^{i\theta}-\frac{1}{r(t)}e^{-i\theta}\right),\gamma(t)\right)
\end{align*}
for some smooth function $r : (0,\varepsilon) \to \mathbb{R}_{>0}$.
Below, we will see the Lagrangian condition $\iota^*\omega_p = 0$ implies that $\varepsilon$ can be taken to be $1$ and $r(t) = 1$ for all $0 < t < \varepsilon = 1$.
Then it follows that $\gamma$ is simple, $\gamma^{-1}(\Delta_p) = \{0,1\}$, $\iota(t,S^1) = \Sigma_{p,\gamma(t)}$ for all $t \in [0,1]$ and therefore $L = L_\gamma$.

By a direct computation, we have
\begin{align*}
\iota^*\omega_p\ = \frac{d}{dt} \left[ \frac{1}{4} \lvert p(\gamma(t)) \rvert \left( r(t)^2 - \frac{1}{r(t)^2} \right) \right] dt \wedge d\theta
\end{align*}
and so $\iota^*\omega_p = 0$ if and only if
\begin{equation*}
\frac{1}{4} \lvert p(\gamma(t)) \rvert \left( r(t)^2 - \frac{1}{r(t)^2} \right) = c
\end{equation*}
is a constant.
On the other hand,
\begin{equation*}
\frac{1}{4} \lvert p(\gamma(t)) \rvert \left( r(t)^2 - \frac{1}{r(t)^2} \right) = \mathrm{Im}(x(t,\theta)\overline{y(t,\theta)}) \to 0 \quad (t \to 0).
\end{equation*}
This shows that $c = 0$ and therefore $r(t) = 1$ for all $0 < t < \varepsilon$.
A similar argument shows that $\varepsilon$ can be taken to be $1$ (otherwise $L$ becomes an $A_k$-chain of $2$-spheres).
\end{proof}

\subsection{Special Lagrangian submanifolds}

We call $(X,\omega,\Omega)$ an {\em almost Calabi--Yau manifold} if $(X,\omega)$ is a (not necessarily compact) K\"ahler manifold and $\Omega$ is a holomorphic volume form.
Recall that a {\em special Lagrangian submanifold} of $(X,\omega,\Omega)$ is a Lagrangian submanifold $L$ of $(X,\omega)$ such that $\mathrm{Im}(e^{-i\phi}\Omega|_L) = 0$ for some $\phi \in \mathbb{R}$ called the {\em phase}.

As before, we define the {\em systole} of $(X,\omega,\Omega)$ by
\begin{equation*}
\mathrm{sys}(\Omega) = \mathrm{sys}(X,\omega,\Omega) = \inf \left\{\left. \left\lvert \int_L \Omega \right\rvert \,\right|\, L \text{ is a special Lagrangian submanifold in } (X,\omega,\Omega) \right\}.
\end{equation*}
Now, for simplicity, assume that the intersection product on $H_d(X,\mathbb{Z})/\mathrm{Torsion}$ (where $d = \dim_\mathbb{C} X$) is non-degenerate.
Fixing a basis $L_1,\dots,L_k$ of $H_d(X,\mathbb{Z})/\mathrm{Torsion}$, we define the {\em volume} of $(X,\omega,\Omega)$ by
\begin{equation*}
\mathrm{vol}(\Omega) = \mathrm{vol}(X,\omega,\Omega) = \left\lvert \sum_{i,j=1}^k \gamma^{ij} \int_{L_i} \Omega \int_{L_j} \overline{\Omega} \right\rvert
\end{equation*}
where $\gamma^{ij}$ is the $(i,j)$-component of the inverse matrix of the intersection matrix $(L_i \cdot L_j)_{i,j}$.

\begin{rmk}
If $X$ is compact, the volume $\mathrm{vol}(X,\omega,\Omega)$ coincides with the usual volume $\left\lvert \int_X \Omega \wedge \overline{\Omega} \right\rvert$.
In general, these two volumes $\mathrm{vol}(X,\omega,\Omega)$ and $\left\lvert \int_X \Omega \wedge \overline{\Omega} \right\rvert$ do not need to coincide for non-compact $X$.
It seems to be an interesting problem to study the relationship between $\mathrm{vol}(X,\omega,\Omega)$ and $\left\lvert \int_X \Omega \wedge \overline{\Omega} \right\rvert$.
\end{rmk}

From now on, let us specialize to the case of Milnor fibers.
As mentioned before, the Milnor fiber $(X_p,\omega_p)$ does not depend on the choice of $p \in \mathcal{P}_n$ as a symplectic manifold.
However, its complex structure depends on the choice of $p \in \mathcal{P}_n$.
More precisely, each $p \in \mathcal{P}_n$ determines a holomorphic volume form on $X_p$ by
\begin{equation*}
\Omega_p = \mathrm{Res}\, \frac{dx \wedge dy \wedge dz}{x^2 + y^2 - p(z)}
\end{equation*}
which also can be written as
\begin{equation*}
\Omega_p = \frac{dx \wedge dy|_{X_p}}{p'(z)} = - \frac{dy \wedge dz|_{X_p}}{2x} = \frac{dx \wedge dz|_{X_p}}{2y}.
\end{equation*}

\begin{lem}\label{lem4.4}
A special Lagrangian submanifold in $(X_p,\omega_p,\Omega_p)$ is $U(1)$-invariant.
\end{lem}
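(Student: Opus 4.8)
The plan is to show that a special Lagrangian $L$ in $(X_p,\omega_p,\Omega_p)$ is actually an \emph{exact} Lagrangian, and then invoke Lemma \ref{lem4.2}: since $L$ is exact, it is a matching cycle (via Theorem \ref{thm4.1} the classification of exact Lagrangians), and matching cycles are by construction $U(1)$-invariant. So the heart of the matter reduces to proving exactness. Recall that $X_p$ is an affine variety with $H^1(X_p;\mathbb{R})=0$ (the $A_n$-plumbing of $T^*S^2$'s is simply connected for the relevant configuration, or at least $b_1=0$), so every Lagrangian submanifold — being a $2$-sphere by Theorem \ref{thm4.1} and hence having $H^1(L;\mathbb{R})=0$ — is automatically exact. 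Wait: that argument already shows \emph{every} Lagrangian $2$-sphere is exact, so actually the content I need is that a special Lagrangian is a $2$-sphere, i.e. diffeomorphic to $S^2$, so that Theorem \ref{thm4.1} applies. Alternatively, and more robustly, I will argue directly with the holomorphic volume form.

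Here is the direct approach I would carry out. Let $L$ be a special Lagrangian of phase $\phi$, so $\operatorname{Im}(e^{-i\phi}\Omega_p|_L)=0$ and $\omega_p|_L=0$. The key observation is that $X_p$ carries the $U(1)$-action and $\Omega_p$ is \emph{equivariant}: a direct check using $\Omega_p = -\,dy\wedge dz|_{X_p}/2x = dx\wedge dz|_{X_p}/2y$ shows $e^{i\theta\,*}\Omega_p = \Omega_p$ (the rotation in the $(x,y)$-plane is a holomorphic isometry preserving the defining equation, hence preserves the residue form). Likewise $\omega_p$ is $U(1)$-invariant. Now consider the moment map $\mu$ for the $U(1)$-action on $(X_p,\omega_p)$; explicitly, from the formula $\iota^*\omega_p$ computed in the proof of Lemma \ref{lem4.2} one reads off that $\mu(x,y,z) = \operatorname{Im}(x\bar y)$ (up to a constant), which indeed generates the rotation and vanishes exactly on the matching-cycle slices $\Sigma_{p,\zeta}$. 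The first step is to show $\mu$ is \emph{locally constant} on $L$: for any vector field $V$ generating the $U(1)$-flow, $d\mu = \iota_V\omega_p$, and restricted to $L$ we get $d(\mu|_L) = (\iota_V\omega_p)|_L$; this need not vanish unless $V$ is tangent to $L$, so this alone is not enough — I must bring in the special condition.

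So the real step is: combine $\omega_p|_L=0$ with $\operatorname{Im}(e^{-i\phi}\Omega_p|_L)=0$ to deduce $\mu|_L\equiv 0$. The mechanism: on the smooth part where $\pi:X_p\to\mathbb{C}$ is a submersion, each fiber $\pi^{-1}(\zeta)\cong\mathbb{C}^*$ is foliated by the $U(1)$-orbits with orbit parameter $r$, and $\mu = \tfrac14|p(\zeta)|(r^2-r^{-2})$ as in Lemma \ref{lem4.2}. Using the parametrization of $L$ over a path $\gamma$ in the $z$-plane exactly as in the proof of Lemma \ref{lem4.2}, $L$ is given near a generic point by functions $(r(t),\rho(\theta,t))$, and the Lagrangian condition already forces (by that computation) that $\tfrac14|p(\gamma(t))|(r(t)^2 - r(t)^{-2})$ is constant along $L$; then I must show this constant is $0$. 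This is where the special Lagrangian condition enters: writing $\Omega_p|_L$ in the same coordinates — using $\Omega_p = dx\wedge dz|_{X_p}/2y$ — and imposing $\operatorname{Im}(e^{-i\phi}\Omega_p|_L)=0$ gives a differential relation on $r(t)$ and $\gamma(t)$ that, together with the endpoint behavior (as $t\to$ a critical value of $\pi$, the orbit shrinks to a point $(0,0,\zeta)$, forcing the constant $c=0$ just as in Lemma \ref{lem4.2}), pins down $r\equiv 1$ along every arc of $L$ lying over a regular value. Hence $L$ is $U(1)$-invariant on a dense open set, and by continuity everywhere.

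**Where the difficulty lies.** The main obstacle is the bookkeeping in the middle step: showing the special Lagrangian condition $\operatorname{Im}(e^{-i\phi}\Omega_p|_L)=0$ is compatible with $L$ reaching the critical fibers of $\pi$ only over points of $\Delta_p$, so that the "constant $c=0$" boundary argument of Lemma \ref{lem4.2} can be rerun. One has to rule out the possibility that $L$ dips into a critical fiber over a regular value of $p$ (it cannot, since there are none: $\pi$ is critical exactly over $\Delta_p$) and that $L$ degenerates in some other way; here Theorem \ref{thm4.1}/Lemma \ref{lem4.2} machinery is what keeps things under control. A cleaner packaging, which I would adopt if the coordinate computation gets unwieldy: prove $L$ is exact (immediate from $H^1(X_p;\mathbb{R})=0$ and $L\cong S^2$ by Theorem \ref{thm4.1}), conclude from Lemma \ref{lem4.2} that $L$ is a matching cycle, hence $U(1)$-invariant — reducing the whole lemma to the single fact that a \emph{compact} special Lagrangian in this exact setting is an exact Lagrangian, which follows because $[\omega_p]=0$ in $H^2$ relative to $L$ and the Liouville primitive restricts to a closed, hence exact, $1$-form on the sphere $L$.
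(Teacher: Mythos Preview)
Your proposal contains a genuine circularity in both approaches, and misses the key idea that the paper uses.

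\textbf{The circularity.} In your first and last paragraphs you want to deduce that $L$ is a matching cycle from Theorem~\ref{thm4.1} and Lemma~\ref{lem4.2}. But Theorem~\ref{thm4.1} only says an exact Lagrangian is \emph{Hamiltonian isotopic} to a matching cycle, not equal to one; Hamiltonian isotopy does not preserve $U(1)$-invariance. And Lemma~\ref{lem4.2} has $U(1)$-invariance as a \emph{hypothesis}, so invoking it here assumes exactly what you are trying to prove. Your ``direct'' moment-map approach has the same problem in disguise: the parametrization $\iota(t,\theta)$ with $r=r(t)$ that you import from the proof of Lemma~\ref{lem4.2} is only available once you know the $U(1)$-orbits lie in $L$; for a Lagrangian that is not yet known to be $U(1)$-invariant there is no reason the intersection $L\cap\pi^{-1}(\zeta)$ should be a single orbit, so the computation that produced the constant $c$ does not apply.

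\textbf{What the paper actually does.} The paper's argument is short and entirely different: since the $U(1)$-action preserves both $\omega_p$ and $\Omega_p$, the rotated submanifolds $L_t=e^{it}\cdot L$ form a smooth family of special Lagrangians of the same phase through $L=L_0$. McLean's deformation theorem \cite{McL} identifies the tangent space to the moduli of special Lagrangians at $L$ with $H^1(L,\mathbb{R})$. As $L\cong S^2$ (this is where your observation about exactness plus Theorem~\ref{thm4.1} is used, and it is the one correct ingredient in your write-up), this space vanishes, so the family $L_t$ is constant and $L$ is $U(1)$-invariant. The missing idea in your attempt is precisely this rigidity argument: rather than trying to pin down $L$ in coordinates, one lets the group act and uses that special Lagrangian spheres admit no nontrivial special Lagrangian deformations.
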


\begin{proof}
Let $L$ be a special Lagrangian submanifold in $(X_p,\omega_p,\Omega_p)$.
For every $t \in \mathbb{R}$, $L_t = e^{it} \cdot L$ is again a special Lagrangian submanifold (of the same phase).
On the other hand, the space of infinitesimal special Lagrangian deformations of $L$ can be identified with $H^1(L,\mathbb{R}) \cong H^1(S^2,\mathbb{R}) = 0$ \cite[Theorem 3.6]{McL}.
It implies that $L_t = L$ for all $t \in \mathbb{R}$.
\end{proof}

\begin{lem}\label{lem4.5}
A matching cycle $L_\gamma$ is a special Lagrangian submanifold in $(X_p,\omega_p,\Omega_p)$ if and only if $\gamma$ is a line segment.
\end{lem}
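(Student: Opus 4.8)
The plan is to reduce the special Lagrangian condition on a matching cycle $L_\gamma$ to an ODE for the path $\gamma$ and then recognize that ODE as the equation of a straight line. By Lemma \ref{lem4.2} a matching cycle is $U(1)$-invariant, and from the parametrization in the proof of that lemma (with $r(t) \equiv 1$) we have an explicit chart $\iota(t,\theta) = \bigl(\sqrt{p(\gamma(t))}\cos\theta, \sqrt{p(\gamma(t))}\sin\theta, \gamma(t)\bigr)$ for $L_\gamma$ away from the two endpoints. The first step is to pull back $\Omega_p$ under $\iota$ and compute $\iota^*\Omega_p$ as a $2$-form in the coordinates $(t,\theta)$. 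Using the expression $\Omega_p = -\tfrac{dy \wedge dz|_{X_p}}{2x}$ (or $\tfrac{dx\wedge dz|_{X_p}}{2y}$, whichever has non-vanishing denominator at a given point) and the identity $x\,dx + y\,dy = \tfrac12 p'(z)\,dz$ on $X_p$, I expect the $d\theta$-component to collapse nicely; the computation should yield something proportional to $\sqrt{p(\gamma(t))}\cdot \overline{\text{(something)}}$ times $\gamma'(t)$, integrated over $\theta \in S^1$.

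The key point is that integrating over the $U(1)$-orbit kills the $\theta$-dependence: the $1$-form $\int_{S^1} \iota^*\Omega_p$ along $\gamma$ should come out to a constant multiple of $\tfrac{d}{dt}\bigl[\text{(primitive of }\sqrt{p})\circ\gamma\bigr]\,dt$, i.e. $\Omega_p$ restricted to $L_\gamma$ is governed by the function $F(z) = \int^z \sqrt{p(w)}\,dw$ — this is the standard fact that the periods of $\Omega_p$ over matching cycles are differences of values of $F$ at the branch points. Concretely, I would show $\int_{L_\gamma}\Omega_p = c\,(F(\gamma(1)) - F(\gamma(0)))$ for a universal constant $c$, and more importantly that $\mathrm{Im}(e^{-i\phi}\Omega_p|_{L_\gamma}) = 0$ is equivalent to $\mathrm{Im}\bigl(e^{-i\phi}\tfrac{d}{dt}F(\gamma(t))\bigr) = 0$ for all $t$. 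That condition says $t \mapsto F(\gamma(t))$ traces a straight line in $\mathbb{C}$ of slope $e^{i\phi}$.

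It remains to convert "$F\circ\gamma$ is a line segment" into "$\gamma$ is a line segment." This is where a little care is needed: $F$ is a primitive of $\sqrt{p}$, which is multivalued, but along the matching path $\gamma$ one chooses a branch of $\sqrt{p}$, and $F$ becomes a local biholomorphism on $\mathbb{C}\setminus\Delta_p$. The cleanest route is to go the other way: parametrize $\gamma$ by the line $F(\gamma(t)) = (1-t)F(\zeta_0) + tF(\zeta_1)$ and observe this forces $\sqrt{p(\gamma(t))}\,\gamma'(t) = \text{const}$, an ODE whose solutions, in the flat coordinate given by $F$, are exactly the affine ones; pulling back, $\gamma$ is the image of a segment under $F^{-1}$. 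To see that such $\gamma$ is a genuine \emph{straight} segment in the $z$-plane — not merely a curve — one uses that the statement is about the \emph{shape} of the special Lagrangian, and the standard dictionary (as in \cite{KS}) identifies special Lagrangian matching cycles over $(X_p,\omega_p,\Omega_p)$ with straight matching paths; concretely, the condition $\mathrm{Im}(e^{-i\phi}\sqrt{p(\gamma(t))}\gamma'(t)) = 0$ together with the reality of the endpoints pins $\gamma$ down to a line segment between two zeros of $p$. The main obstacle I anticipate is bookkeeping the branch of $\sqrt{p}$ consistently near the two endpoints (where $p$ vanishes, so $\sqrt{p}$ is only Hölder-$\tfrac12$) and confirming that the boundary behavior of $F$ at the branch points does not obstruct the conclusion — but since $F(z) - F(\zeta_i) \sim \tfrac{2}{3}p'(\zeta_i)^{1/2}(z-\zeta_i)^{3/2}$ near a simple zero $\zeta_i$, the map $F$ extends continuously and the line-segment condition transfers without trouble.
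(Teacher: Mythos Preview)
Your plan goes astray at the very first computational step: the pullback $\iota^*\Omega_p$ does \emph{not} pick up a factor of $\sqrt{p(\gamma(t))}$. Using $\Omega_p = -\dfrac{dy\wedge dz}{2x}$ and the parametrization $\iota(t,\theta)=(\sqrt{p(\gamma(t))}\cos\theta,\sqrt{p(\gamma(t))}\sin\theta,\gamma(t))$, one finds $dy\wedge dz = -\sqrt{p(\gamma(t))}\cos\theta\cdot\gamma'(t)\,dt\wedge d\theta$, and dividing by $2x=2\sqrt{p(\gamma(t))}\cos\theta$ cancels the $\sqrt{p}$ completely, leaving
\[
\iota^*\Omega_p=\frac{\gamma'(t)}{2}\,dt\wedge d\theta.
\]
Hence $\mathrm{Im}(e^{-i\phi}\Omega_p|_{L_\gamma})=0$ is exactly the condition that $\arg\gamma'(t)$ is constant, i.e.\ $\gamma$ is a straight segment in the $z$-plane. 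This is the paper's argument, and it is a one-line consequence of the pullback; no primitive $F$, no branch bookkeeping, no endpoint analysis is needed. (Consistently, the period is $\int_{L_\gamma}\Omega_p=\pi(\gamma(1)-\gamma(0))$, matching the formula $\lvert\int_{L_{ij}}\Omega_p\rvert=\pi\cdot l_{ij}(p)$ used later.)

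More seriously, the detour you propose is not just unnecessary but would give the wrong answer if your expected pullback were correct. The condition $\mathrm{Im}\bigl(e^{-i\phi}\sqrt{p(\gamma(t))}\gamma'(t)\bigr)=0$ says that $\gamma$ is a geodesic for the flat metric $\lvert p(z)\rvert\,\lvert dz\rvert^2$ (equivalently, a straight line in the $F$-coordinate), and such curves are \emph{not} straight segments in the $z$-plane in general. Your ``conversion'' paragraph from ``$F\circ\gamma$ is a segment'' to ``$\gamma$ is a segment'' appeals to a dictionary that does not say this; a local biholomorphism need not take lines to lines. So had the $\sqrt{p}$ really survived, the lemma as stated would be false, and the hand-wave at the end would not rescue it. The fix is simply to do the two-line computation above and observe the cancellation.
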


\begin{proof}
Consider the parametrization $\iota : (0,1) \times S^1 \to X_p$ of $L_\gamma$ given by
\begin{equation*}
\iota(t,\theta) = (\sqrt{p(\gamma(t))} \cos\theta,\sqrt{p(\gamma(t))} \sin\theta,\gamma(t)).
\end{equation*}
Then a direct computation shows that
\begin{equation*}
\iota^*\Omega_p = \frac{\gamma'(t)}{2} dt \wedge d\theta.
\end{equation*}
Thus $L_\gamma$ being a special Lagrangian submanifold means that $\arg \gamma'(t)$ is a constant, or equivalently that $\gamma$ is a line segment.
\end{proof}

Combining Lemmas \ref{lem4.2}, \ref{lem4.4} and \ref{lem4.5}, we obtain the following classification of special Lagrangian submanifolds in $(X_p,\omega_p,\Omega_p)$.

\begin{cor}\label{cor4.6}
Special Lagrangian submanifolds in $(X_p,\omega_p,\Omega_p)$ are precisely those of the forms $L_\gamma$ for line segments $\gamma$.
\end{cor}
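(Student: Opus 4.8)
The plan is to combine the three preceding lemmas. By Theorem \ref{thm4.1}, every exact Lagrangian submanifold of $(X_p,\omega_p)$ is Hamiltonian isotopic to a matching cycle $L_\gamma$; and since being a special Lagrangian submanifold is a property preserved under the relevant deformations, it suffices to understand which matching cycles are special Lagrangian. Lemma \ref{lem4.5} already answers this: $L_\gamma$ is special Lagrangian exactly when $\gamma$ is a line segment. So the only thing that needs to be checked is that there are no \emph{other} special Lagrangian submanifolds — i.e., that every special Lagrangian submanifold really is (Hamiltonian isotopic to, hence equal to after the isotopy, one of) the matching cycles $L_\gamma$.

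Here is where Lemmas \ref{lem4.2} and \ref{lem4.4} come in. First I would invoke Lemma \ref{lem4.4}: a special Lagrangian submanifold $L$ in $(X_p,\omega_p,\Omega_p)$ is automatically $U(1)$-invariant. Then I would invoke Lemma \ref{lem4.2}: a $U(1)$-invariant exact Lagrangian submanifold is a matching cycle. (One should note that a special Lagrangian submanifold is in particular exact, since $X_p$ is an affine variety and $H^1$ of a Lagrangian $2$-sphere vanishes, which is implicit in the earlier discussion.) Therefore $L = L_\gamma$ for some matching path $\gamma$, and now Lemma \ref{lem4.5} forces $\gamma$ to be a line segment. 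Conversely, for any line segment $\gamma$ joining two distinct zeros of $p$, the matching cycle $L_\gamma$ is a special Lagrangian submanifold by Lemma \ref{lem4.5} again. This establishes the claimed equivalence.

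There is essentially no obstacle remaining at this stage — the corollary is a direct concatenation of implications already proved. The only point requiring a moment of care is the logical direction: Lemma \ref{lem4.4} gives special $\Rightarrow$ $U(1)$-invariant, Lemma \ref{lem4.2} gives ($U(1)$-invariant exact) $\Leftrightarrow$ matching cycle, and Lemma \ref{lem4.5} gives (matching cycle) special $\Leftrightarrow$ line segment; chaining these yields special $\Leftrightarrow$ $L_\gamma$ with $\gamma$ a line segment. So the proof is just: let $L$ be special Lagrangian; by Lemma \ref{lem4.4} it is $U(1)$-invariant; by Lemma \ref{lem4.2} it equals $L_\gamma$ for some matching path $\gamma$; by Lemma \ref{lem4.5} $\gamma$ is a line segment. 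Conversely any such $L_\gamma$ is special Lagrangian by Lemma \ref{lem4.5}. This is the entire argument.
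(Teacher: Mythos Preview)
Your proof is correct and follows exactly the paper's approach: chain Lemma~\ref{lem4.4} (special $\Rightarrow$ $U(1)$-invariant), Lemma~\ref{lem4.2} ($U(1)$-invariant exact $\Leftrightarrow$ matching cycle), and Lemma~\ref{lem4.5} (matching cycle special $\Leftrightarrow$ line segment). One caution: your aside in the first paragraph that ``being a special Lagrangian submanifold is a property preserved under the relevant deformations'' is false---Hamiltonian isotopy does not preserve the special Lagrangian condition---but you correctly abandon this line of reasoning, and your actual argument in the later paragraphs does not depend on it.
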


\subsection{Proof of Theorem \ref{thm1.2}}

In this subsection, we will view $\mathcal{P}_n$ as the configuration space $\mathrm{Conf}_{n+1}^0(\mathbb{C})$.
Let $\mathcal{P}_n^\circ \subset \mathcal{P}_n$ be the configuration space of $n+1$ points in $\mathbb{C}$ in general position (with the center of mass $0$) in the sense that no 3 points of them lie on a single line.
For $p = [\zeta_1,\dots,\zeta_{n+1}] \in \mathcal{P}_n^\circ$, let us fix an order of $n+1$ points $\zeta_1,\dots,\zeta_{n+1}$.
We then define $\l_{ij}(p)$ ($1 \leq i \leq j \leq n$) to be the length of the line segment connecting $\zeta_i$ and $\zeta_{j+1}$.

Fix $p = [\zeta_1,\dots,\zeta_{n+1}] \in \mathcal{P}_n^\circ$ and let $L_{ij}$ ($1 \leq i \leq j \leq n$) be the matching cycle associated to the line segment connecting $\zeta_i$ and $\zeta_{j+1}$.
By Corollary \ref{cor4.6}, these exhaust all special Lagrangian submanifolds in $(X_p,\omega_p,\Omega_p)$.
Then since
\begin{equation*}
\left\lvert \int_{L_{ij}} \Omega_p \right\rvert = \pi \cdot l_{ij}(p),
\end{equation*}
we can write the systole of $(X_p,\omega_p,\Omega_p)$ as
\begin{equation*}
\mathrm{sys}(\Omega_p) = \pi \cdot \inf_{1 \leq i \leq j \leq n} l_{ij}(p).
\end{equation*}

Now take $L_1 = L_{11},\dots,L_n = L_{nn}$ as a basis of $H_2(X_p,\mathbb{Z}) \cong \mathbb{Z}^n$.
Their intersection matrix $(L_i \cdot L_j)_{i,j}$ is the Cartan matrix of type $A_n$ (under a suitable choice of orientations) \cite{KS}.
Then, as in the proof of Proposition \ref{prop3.8}, we can show that
\begin{equation*}
\mathrm{vol}(\Omega_p) = \frac{\pi^2}{n+1} \sum_{1 \leq i \leq j \leq n} l_{ij}(p)^2.
\end{equation*}

On the other hand, it is known that
\begin{equation}\label{eq4.1}
\mathrm{Sph}(\mathcal{D}_{A_n}) \backslash \mathrm{Stab}^\circ(\mathcal{D}_{A_n}) \simeq \mathcal{P}_n
\end{equation}
\cite[Theorem 6.4]{Tho} (also see \cite[Theorem 1.1]{Ike}).
For $p \in \mathcal{P}_n^\circ$, we can take a representative $\sigma_p = (Z,\mathcal{P}) \in \mathrm{Stab}^\circ(\mathcal{D}_{A_n})$ under this correspondence with the properties that there exists a $\sigma_p$-stable object $S_{ij}$ ($1 \leq i \leq j \leq n$) in the class $S_i + \cdots + S_j \in K(\mathcal{D}_{A_n})$ satisfying $Z(S_{ij}) = l_{ij}(p)$ and the set of $\sigma_p$-stable objects coincides with the set of shifts of $S_{ij}$ \cite{Tho}.
Note that, by Lemma \ref{lem2.8}, $\mathrm{sys}(\sigma) = \mathrm{sys}(\sigma_p)$, $\mathrm{vol}(\sigma) = \mathrm{vol}(\sigma_p)$ for any representative $\sigma \in \mathrm{Stab}^\circ(\mathcal{D}_{A_n})$ of the element corresponding to $p \in \mathcal{P}_n$ under the correspondence \eqref{eq4.1}.

\begin{prop}\label{prop4.7}
For every $p \in \mathcal{P}_n$,
\begin{gather*}
\mathrm{sys}(\Omega_p) = \pi \cdot \mathrm{sys}(\sigma_p),\\
\mathrm{vol}(\Omega_p) = \pi^2 \cdot \mathrm{vol}(\sigma_p).
\end{gather*}
\end{prop}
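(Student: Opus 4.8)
The plan is to prove Proposition \ref{prop4.7} by unwinding the formulas for systole and volume on both the symplectic and the categorical sides and matching them term by term, using the correspondence \eqref{eq4.1}. By Lemma \ref{lem2.8} the quantities $\mathrm{sys}(\sigma_p)$ and $\mathrm{vol}(\sigma_p)$ are independent of the choice of representative, so it suffices to work with the specific $\sigma_p = (Z,\mathcal{P})$ singled out above, for which $Z(S_{ij}) = l_{ij}(p)$ and the $\sigma_p$-stable objects are exactly the shifts of the $S_{ij}$ for $1 \le i \le j \le n$.

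First I would handle the case $p \in \mathcal{P}_n^\circ$, which is the generic situation. For the systole: on the geometric side we recorded $\mathrm{sys}(\Omega_p) = \pi \cdot \inf_{i \le j} l_{ij}(p)$, which comes from Corollary \ref{cor4.6} together with the computation $\lvert \int_{L_{ij}} \Omega_p \rvert = \pi \, l_{ij}(p)$. On the categorical side, since the $\sigma_p$-stable objects are the shifts of the $S_{ij}$ and $\lvert Z(S_{ij}[m]) \rvert = \lvert Z(S_{ij}) \rvert = l_{ij}(p)$, Definition \ref{dfn2.5} gives $\mathrm{sys}(\sigma_p) = \inf_{i \le j} l_{ij}(p)$. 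Comparing the two yields $\mathrm{sys}(\Omega_p) = \pi \cdot \mathrm{sys}(\sigma_p)$. For the volume: the basis $L_1 = L_{11}, \dots, L_n = L_{nn}$ of $H_2(X_p,\mathbb{Z})$ has intersection matrix equal to the $A_n$ Cartan matrix, and under \eqref{eq4.1} the class $[L_{ij}]$ corresponds to $S_i + \cdots + S_j$, so $\int_{L_i} \Omega_p$ plays the role of $Z(S_i)$ up to the factor $\pi$ (indeed $\int_{L_{ii}} \Omega_p$ has modulus $\pi l_{ii}(p) = \pi \lvert Z(S_i) \rvert$, and more precisely one can arrange $\int_{L_i}\Omega_p = \pi Z(S_i)$ by the choice of $\sigma_p$). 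Since $\mathrm{vol}(\Omega_p)$ and $\mathrm{vol}(\sigma_p)$ are both built from the same bilinear expression $\sum \chi^{ij}(\cdot)_i \overline{(\cdot)_j}$ with the same inverse Cartan matrix $(\chi^{ij})$, substituting $\int_{L_i}\Omega_p = \pi Z(S_i)$ pulls out a factor $\pi \cdot \overline{\pi} = \pi^2$, giving $\mathrm{vol}(\Omega_p) = \pi^2 \cdot \mathrm{vol}(\sigma_p)$. Equivalently one can cite the two displayed formulas $\mathrm{vol}(\Omega_p) = \tfrac{\pi^2}{n+1}\sum_{i\le j} l_{ij}(p)^2$ and, from Proposition \ref{prop3.8} applied in the class $S_i+\cdots+S_j$ with $h_{A_n} = n+1$, $\mathrm{vol}(\sigma_p) = \tfrac{1}{n+1}\sum_{i\le j} l_{ij}(p)^2$, and divide.

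It remains to remove the genericity hypothesis, i.e.\ to pass from $\mathcal{P}_n^\circ$ to all of $\mathcal{P}_n$. Here I would use a continuity/density argument: $\mathcal{P}_n^\circ$ is open and dense in $\mathcal{P}_n$ (its complement is the configurations with three collinear points, a closed analytic subset of positive codimension), both $\mathrm{sys}$ and $\mathrm{vol}$ are continuous functions of $p$ on the symplectic side (the special Lagrangians vary continuously, and the period integrals depend holomorphically on $p$), and on the categorical side $\mathrm{sys}(\sigma_p)$ and $\mathrm{vol}(\sigma_p)$ are continuous in $\sigma_p \in \mathrm{Stab}^\circ(\mathcal{D}_{A_n})$ by the properties of Bridgeland's topology together with the explicit formula for $\mathrm{vol}$; moreover $p \mapsto \sigma_p$ can be taken continuous via \eqref{eq4.1}. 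Since the two sides of each identity are continuous and agree on the dense subset $\mathcal{P}_n^\circ$, they agree everywhere on $\mathcal{P}_n$.

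The main obstacle is the careful bookkeeping in the volume comparison: one must be sure that the identification $[L_{ij}] \leftrightarrow S_i + \cdots + S_j$ is compatible with orientations so that the intersection matrix $(L_i \cdot L_j)$ is literally the $A_n$ Cartan matrix $(\chi(S_i,S_j))$ and not a sign-twisted version, and one must confirm that the phases of the $Z(S_{ij})$ can genuinely be aligned with the arguments of $\int_{L_{ij}}\Omega_p$ (not merely their moduli) — this is what the cited property of $\sigma_p$ from \cite{Tho} provides. A secondary subtlety is making the continuity argument on the stability-space side precise, in particular the continuity of $\sigma \mapsto \mathrm{sys}(\sigma)$, which is only lower semicontinuous in general; however, on the region parametrized by $\mathcal{P}_n$ the set of stable classes is locally constant (always the $S_i+\cdots+S_j$), so $\mathrm{sys}$ is in fact continuous there, and this is enough. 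Everything else is a routine substitution into the already-established formulas.
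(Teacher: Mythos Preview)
Your proposal is correct and follows the same two-step strategy as the paper: verify the identities directly for $p \in \mathcal{P}_n^\circ$ using the explicit description of $\sigma_p$ and the formulas already displayed for $\mathrm{sys}(\Omega_p)$ and $\mathrm{vol}(\Omega_p)$, then extend to all of $\mathcal{P}_n$ by density. The paper's own proof is in fact just those two sentences; your version simply spells out the term-by-term matching (and the continuity caveat for $\mathrm{sys}$) that the paper leaves implicit.
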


\begin{proof}
The above description of the representative $\sigma_p \in \mathrm{Stab}^\circ(\mathcal{D}_{A_n})$ shows that the assertion holds for every $p \in \mathcal{P}_n^\circ$.
As $\mathcal{P}_n^\circ$ is dense in $\mathcal{P}_n$, it also shows that the assertion holds for every $p \in \mathcal{P}_n$.
\end{proof}

\begin{proof}[Proof of Theorem \ref{thm1.2}]
Follows from Theorem \ref{thm1.1} and Proposition \ref{prop4.7}.
\end{proof}

\end{document}